\newtheorem{theorem}{Theorem}[section]
\newtheorem{lemma}[theorem]{Lemma}
\newtheorem{prop}[theorem]{Proposition}
\newtheorem{corollary}[theorem]{Corollary}
\theoremstyle{definition}
\newtheorem{assumption}[theorem]{Assumption}
\theoremstyle{remark}
\newtheorem{remark}[theorem]{Remark}
\newcommand{\pr}{\textnormal{pr}}
\newcommand{\PP}{\textnormal{P}}
\newcommand{\QQ}{\textnormal{Q}}
\numberwithin{equation}{section}
\begin{document}
\title{\bf Approximate Slow Manifolds \\ in the Fokker-Planck Equation} 
\date{}
\author{Christian Kuehn$\,^1$, Jan-Eric Sulzbach$\,^{2,*}$}
\affil{\small
    $\,^1$ Department of Mathematics, Technical University of Munich\\
    email: ckuehn@ma.tum.de\\
    
    \vspace{0.3cm}
    $\,^2$ Department of Mathematics, Technical University of Munich\\
    email: janeric.sulzbach@tum.de\\
    $\,*$ corresponding author\\
    
}

\maketitle

\begin{abstract}
%\textcolor{blue}{New:}
   In this paper we study the dynamics of a fast-slow Fokker-Planck partial differential equation (PDE) viewed as the evolution equation for the density of a multiscale planar stochastic differential equation (SDE).
   Our key focus is on the existence of a slow manifold on the PDE level, which is a crucial tool from the geometric singular perturbation theory allowing the reduction of the system to a lower dimensional slowly evolving equation. In particular, we use a projection approach based upon a Sturm-Liouville eigenbasis to convert the Fokker-Planck PDE to an infinite system of PDEs that can be truncated/approximated to any order. Based upon this truncation, we can employ the recently developed theory for geometric singular perturbation theory for slow manifolds for infinite-dimensional evolution equations. This strategy presents a new perspective on the dynamics of multiple time-scale SDEs as it combines ideas from several previously disjoint reduction methods.
\end{abstract}

\section{Introduction}\label{sec:introduction}

% Q1: What is the goal/aim of the paper?\\

% \noindent Q2: What is the novelty of the paper and what has been done in the field before?\\

% \noindent Q3: What is the problem set-up and what are the detailed questions answered?\\

Complex dynamical systems with both, multiple time scales and uncertainty caused by noisy fluctuations, play an important role in both science and engineering.
Two particular fields of application are in climate science, where the systems usually exhibit a fast-slow and stochastic behaviour \cite{majda2001mathematical,majda2003systematic,monahan2011stochastic}, and in neuroscience \cite{lindner2004effects,laing2009stochastic,franovic2020dynamics}. 
An important tool that has been developed in the context of deterministic fast-slow systems is the geometric singular perturbation theory \cite{fenichel1971persistence,fenichel1979geometric,jones1995gspt,kuehn2015multiple}.
In this context invariant manifolds, which are geometric structures in the phase space that help describe the dynamical behavior of multiscale systems, play a major role. 
For dynamical systems with a fast-slow structure a slow manifold is a specific type of invariant manifold.
Broadly speaking, it is an invariant manifold represented as a graph over a suitable subset of phase space variables, called the slow variables. If the dynamics outside of the slow manifold is attracting in the remaining fast variables, the system can be reduced to a self-contained, lower-dimensional evolution equation that depends exclusively on the slow variables. This procedure can facilitate both, geometric and numerical analysis. Here we focus on the geometric viewpoint. A natural question is, whether the classically deterministic reduction techniques can be transferred to the stochastic setting.

A first answer has been given by Berglund and Gentz \cite{berglund2003geometric,berglund2006noise} who considered the following stochastic multiscale system
\begin{align}\label{eq: SDE intro}
\begin{split}
    \textnormal{d}x&= \frac{1}{\varepsilon} f(x,y,\varepsilon)\, \textnormal{d}t +\frac{\sigma}{\sqrt{\varepsilon}}F(x,y,\varepsilon)\,\textnormal{d}W_t,\quad x\in \mathbb{R},\\
    \textnormal{d}y&=  g(x,y,\varepsilon)\, \textnormal{d}t +\sigma' G(x,y,\varepsilon)\,\textnormal{d}W_t,\quad y\in \mathbb{R},
    \end{split}
\end{align}
where $W_t$ denotes a standard Brownian motion, $\sigma,\sigma'>0$ are positive parameters and $0<\varepsilon\ll 1$ is the small parameter encoding the multiscale behaviour of the system.
In their work the authors introduce the concept of an invariant manifold for the above multiscale stochastic differential system via a probabilistic point of view by working with the covariance as variable of interest.
Their goal is to estimate quantitatively the noise-induced spreading of typical paths as well as the probability of exceptional paths near a deterministic slow manifold. 
Moreover, they prove that the sample paths of the above system are concentrated in some neighborhood of the slow manifold of the corresponding deterministic system up to some time with certain probability.

The second answer was given by Schmalfuss and Schneider \cite{schmalfuss2008invariant} and later also by Duan et al \cite{duan2013slow,ren2015approximation}.
In these works the authors consider a random dynamical system with two time scales
\begin{align}\label{eq: RDS intro}
\begin{split}
     \frac{\textnormal{d} x}{\textnormal{d} t}&= \tilde f(\theta_t^1\omega, \theta^{2,\varepsilon}_t\omega,x,y),\quad x\in \mathbb{R},\\
     \frac{\textnormal{d} y}{\textnormal{d} t}&= \varepsilon \tilde g(\theta_t^1\omega, \theta^{2,\varepsilon}_t\omega,x,y),\quad y\in \mathbb{R},
\end{split}
\end{align}
where the stochastic noise is given by a metric dynamical system $\Theta=\big(\Omega,\mathcal{F},\mathbb{P},\theta \big)$.
For the definition of $\Theta$ and the relation between $f,g$ and $\tilde f,\tilde g$ we refer to \cite{schmalfuss1998random,schmalfuss2008invariant}.
The focus of these works is then to establish the existence of a random invariant manifold for the random dynamical system that eliminates the fast variables.
The approaches rely on an appropriate random graph transform and a Lyapunov-Perron operator method respectively.

A different method to think about multiscale stochastic dynamical systems is stemming from theoretical physics \cite{kaneko1981adiabatic,van1985elimination,gardiner2009stochastic}.
The idea is to transform the stochastic dynamical system \eqref{eq: SDE intro} into a Fokker-Planck equation of the form
\begin{align}\label{eq: FPE intro}
    \begin{split}
        \partial_t \rho^\varepsilon&= -\frac{1}{\varepsilon}\partial_x (f(x,y,\varepsilon)\rho^\varepsilon)+ \frac{\sigma^2}{2\varepsilon} \partial_{x,x}\big( F(x,y,\varepsilon)^2\rho^\varepsilon\big)- \partial_y (g(x,y,\varepsilon)\rho^\varepsilon)+ \frac{(\sigma')^2}{2}\partial_{y,y}\big(G(x,y,\varepsilon)^2\rho^\varepsilon\big),
    \end{split}
\end{align}
where $\rho^\varepsilon=\rho^\varepsilon(x,y,t;x_0,y_0,t_0)$ denotes the transition probability density from $\big(x(t_0),y(t_0)\big)=\big(x_0,y_0\big)$ to $\big(x(t),y(t)\big)$, and then eliminate the fast variables in this setting. 
This then yields a reduced Fokker-Planck equation corresponding to the evolution of the slow variables in the stochastic differential equation.
We want to point out that this method is closely related to the concept of stochastic averaging, see the results of \cite{lochak2012multiphase,hijawi1997nonlinear,pavliotis2008multiscale}.

In this work, we introduce a novel approach to the stochastic reduction method for the Fokker-Planck equation. 
Our starting point is the fast-slow stochastic differential equation \eqref{eq: SDE intro}, from which we derive the fast-slow Fokker-Planck equation \eqref{eq: FPE intro} under suitable boundary conditions. 
However, our objective extends beyond simply obtaining a reduced equation; we aim to explore the geometric structure of the reduced system within the framework of the geometric singular perturbation theory \cite{jones1995geometric,kuehn2015multiple}. 
The central result of this work is a generalization of Fenichel’s theorem \cite{fenichel1971persistence,fenichel1979geometric} in the context of fast-slow Fokker-Planck equations.

To be more precise, we first of all rewrite the fast-slow Fokker-Planck equation into a system of two equations by using a special projection operator $\PP$.
These projections appear also in the context of statistical mechanics and are used in the context of the Mori-Zwanzig formalism \cite{mori1965transport,zwanzig1960ensemble,grabert2006projection,j2007statistical,te2020projection}, where the formalism is used as a coarse-graining procedure to extract the physical relevant observables.
Hence, we obtain
\begin{align}\label{eq: intro split FPE}
    \partial_t v^\varepsilon&= \PP\bigg( \frac{(\sigma')^2}{2}\partial_{y,y}\big(G(x,y,\varepsilon)^2(v^\varepsilon+w^\varepsilon)\big)- \partial_y \big(g(x,y,\varepsilon)(v^\varepsilon+w^\varepsilon)\big)\bigg),\\
    \begin{split}
    \partial_t w^\varepsilon&=\frac{\sigma^2}{2\varepsilon} \partial_{x,x}\big( F(x,y,\varepsilon)^2 w^\varepsilon\big) -\frac{1}{\varepsilon}\partial_x (f(x,y,\varepsilon) w^\varepsilon)\\ \label{eq: intro split FPE end}
    &\quad + \QQ\bigg(\frac{(\sigma')^2}{2}\partial_{y,y}\big(G(x,y,\varepsilon)^2(v^\varepsilon+w^\varepsilon)\big) - \partial_y \big(g(x,y,\varepsilon)(v^\varepsilon+w^\varepsilon)\big)\bigg),
    \end{split}
\end{align}
where $\QQ=\textnormal{I}-\PP$, and $v^\varepsilon,w^\varepsilon$ are given by $\PP \rho^\varepsilon$ and $\QQ \rho^\varepsilon$ respectively.
Now, under suitable assumptions that also hold up in many applications we can take the formal limit as $\varepsilon\to 0$ and obtain
\begin{align}
    \partial_t v^0&= \PP\bigg( \frac{(\sigma')^2}{2}\partial_{y,y}\big(G(x,y,0)^2 v^0\big)- \partial_y \big(g(x,y,0)v^0 \big)\bigg)
\end{align}
defined on the critical manifold
\begin{align}
    S_0=\{ (w^0,v^0): w^0=0\}.
\end{align}
Yet, in this setting we cannot directly apply the slow manifold theory. 
To this end we further exploit the fast-slow structure by reducing equations \eqref{eq: intro split FPE}-\eqref{eq: intro split FPE end} via a spectral Galerkin reduction with parameter $j\in \mathbb{N}$, similar to the one presented in \cite{knezevic2009spectral}. This then concludes the stochastic reduction method, where we have converted the $2$d fast-slow Fokker-Planck equation \eqref{eq: FPE intro} into an infinite system of linear coupled fast-slow equations depending only on one spatial variable. In particular, we truncate the Galerkin approximation at an arbitrary finite order $J\in \mathbb{N}$ and call this system (FPEt) derived from \eqref{eq: intro split FPE}-\eqref{eq: intro split FPE end}. The system (FPEt) is still a very large system of PDEs and grows in system size if a better approximation is required. We develop for (FPEt) a generalization of Fenichel's Theory when the critical manifold is normally hyperbolic and attracting. In particular, we prove the existence of an invariant manifold for (FPEt). Hence, the main result can be stated as follows.

\begin{theorem}
    Under suitable boundary conditions, initial data and structural assumptions on the nonlinear functions arising in the Fokker-Planck equation, as stated in detail in the next section, there exists a slow manifold $S_\varepsilon$ for equation (FPEt) satisfying the following:
    \begin{itemize}
        \item The slow dynamics on the slow manifold $S_\varepsilon$ converges to the dynamics on the critical manifold of (FPEt) as $\varepsilon\rightarrow 0$;
        \item The slow manifold has a (Hausdorff semi-)distance of $\mathcal{O}(\varepsilon)$ to the critical manifold of (FPEt) as $\varepsilon\to 0$;
        \item The manifold $S_{\varepsilon}$ is a locally exponential attracting invariant manifold which has the same regularity as the critical manifold of (FPEt);
        \item  The semi-flow of the solution on the slow manifold $S_{\varepsilon}$ converges to the semi-flow on the critical manifold of (FPEt). 
    \end{itemize}
\end{theorem}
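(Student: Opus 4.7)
\medskip
\noindent\textbf{Proof proposal.} The plan is to reformulate the truncated system (FPEt) as an abstract fast--slow evolution equation on a product Banach space and then invoke the recently developed infinite-dimensional Fenichel-type theorem cited in the introduction. The Sturm--Liouville projection has already done the bulk of the work of separating the fast and slow directions, so what remains is to verify the hypotheses of the abstract theorem and transport its conclusions back to the Fokker--Planck setting.

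First, I would write (FPEt) schematically as
\begin{align*}
\partial_t V &= H(V, W, \varepsilon), \\
\varepsilon \partial_t W &= A(V)\, W + \varepsilon K(V, W, \varepsilon),
\end{align*}
where $V$ collects the Galerkin coefficients of $v^\varepsilon = \PP\rho^\varepsilon$ as functions of the slow variable $y$, $W \in \mathbb{R}^J$ (tensored with a function space in $y$) collects the truncated coefficients of $w^\varepsilon = \QQ\rho^\varepsilon$, and $A$ is the $y$-parametrised operator inherited from the fast Fokker--Planck generator $\tfrac{\sigma^2}{2}\partial_{xx}(F^2\,\cdot) - \partial_x(f\,\cdot)$ after projection onto the first $J$ Sturm--Liouville modes. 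Setting $\varepsilon = 0$ in the fast equation forces $A(V)W = 0$; the structural hypothesis that the Sturm--Liouville spectrum is bounded away from zero then identifies the critical manifold of (FPEt) as $S_0 = \{W = 0\}$, parametrised by $V$, and the reduced equation on $S_0$ is exactly $\partial_t V = H(V,0,0)$.

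Second, I would verify the hypotheses of the abstract slow-manifold theorem on an appropriate Banach space of functions in $y$: (i) well-posedness of the coupled semi-flow, including that $A(V)$ generates an analytic, contracting semigroup uniformly in $V$ on bounded sets; (ii) smoothness of $H$ and $K$ in their arguments, which is inherited from the smoothness assumptions on $f,g,F,G$; and (iii) normal hyperbolicity and attractivity of $S_0$, i.e.\ a uniform spectral gap $\mathrm{Re}\,\sigma(A(V)) \leq -\lambda < 0$. Step (iii) is where the Sturm--Liouville eigenbasis is decisive: by construction the basis (block-)diagonalises the fast generator, and the stability assumption on the underlying fast dynamics translates into a strictly negative spectral bound for $A(V)$ that is uniform in $V$ on bounded subsets and in $y$.

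With these hypotheses in hand, the abstract theorem directly produces a graph $W = h_\varepsilon(V)$ whose image is the locally invariant slow manifold $S_\varepsilon$, together with the four claimed properties: the $\mathcal{O}(\varepsilon)$ Hausdorff semi-distance follows from $h_0\equiv 0$ and the fact that $h_\varepsilon$ is obtained as the fixed point of a contraction with $\mathcal{O}(\varepsilon)$ forcing; local exponential attractivity is a consequence of the spectral gap in (iii); the regularity of $S_\varepsilon$ matches that of $S_0$ because $h_\varepsilon$ inherits the smoothness of $H,K$; and convergence of the reduced semi-flow is obtained by substituting $W = h_\varepsilon(V)$ into the slow equation and applying a Gronwall argument on compact time intervals. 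The main obstacle I anticipate is step (iii): verifying a \emph{uniform} spectral gap for $A(V)$ in both the slow variable $V$ and the physical variable $y$, given the nontrivial $y$-dependence of $f$ and $F$; a subsidiary technical point is selecting the function space in $y$ so that the couplings through $\PP$, $\QQ$ and the $\partial_{yy}$ terms remain smooth and bounded. The structural assumptions announced for the next section should be calibrated precisely to overcome these two difficulties.
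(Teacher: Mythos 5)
Your high-level strategy matches the paper's: recast the truncated coefficient system as an abstract fast--slow evolution equation, identify the critical manifold as $\{W=0\}$ via the Sturm--Liouville spectral gap, and obtain $S_\varepsilon$ as the graph of a Lyapunov--Perron fixed point. However, there is a genuine gap where you treat the slow equation as an unproblematic input to ``the abstract theorem.'' In (FPEt) the slow equation carries the unbounded operator $B=\tfrac{\sigma_2^2}{2}\partial_{yy}$ on $L^2(-R,R)$ with Dirichlet conditions, and the Lyapunov--Perron construction requires solving the slow component \emph{backwards} in time on $(-\infty,0]$, which is impossible on all of $Y$ because $\mathrm{e}^{tB}$ is only a semigroup. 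The paper's resolution --- and the step missing from your proposal --- is an additional splitting $Y=Y_F^\zeta\oplus Y_S^\zeta$ of the slow variable into high Fourier modes (which decay essentially as fast as the fast variable and must be grouped with it) and finitely many low modes (on which $B$ generates a group), governed by an auxiliary parameter $\zeta$ with $\varepsilon\zeta^{-1}\le 1$. The contraction property of the Lyapunov--Perron operator then hinges on the quantitative spectral gap condition \eqref{spectral gap condition}, which intertwines $\omega_A$, the splitting rates $N_F^\zeta,N_S^\zeta$, and the Lipschitz constants $L_{\mathcal{F}}$, $L_{\mathcal{G}}$. Your anticipated obstacle of a ``uniform spectral gap for $A(V)$'' is not the real obstruction (the coefficient system is in fact linear, so the fast operator does not depend on $V$); the obstruction is this slow-space splitting.

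This also explains a point your proposal leaves unmotivated: why the Galerkin truncation at finite $J$ is indispensable rather than merely convenient. The verified spectral gap condition \eqref{eq: condition J spectral gap} contains the sum $\sum_{j=1}^{J}L_{\mathcal{F}_j}$, which in the worked example grows like $J^3$; only for finite $J$ (there $J=\mathcal{O}(\varepsilon^{-1/6})$) can the condition hold for small $\varepsilon$. Separately, the paper must prove (Proposition \ref{prop: galerkin approx}, via energy estimates on the tail of the coefficient system) that the truncated system approximates the full infinite one, which is what justifies calling $S_\varepsilon$ an approximate slow manifold for the original Fokker--Planck equation; your proposal does not address this approximation step, nor the final retransformation through the eigenbasis and the projections $\PP$, $\QQ$ back to a manifold statement for the density $\varrho^\varepsilon$.
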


In particular, since the Galerkin truncation level can be chosen to any arbitrary finite order, we can then approximate trajectories of the original Fokker-Planck equation to any desired accuracy by  of (FPEt), so that we can view $S_\varepsilon$ as an approximate slow manifold for our original problem. 

\begin{corollary}
Under the same assumptions as in the previous theorem and given any fixed finite time $T>0$, there exists a Galerkin truncation level such that the dynamics on the slow manifold $S_\varepsilon$ approximates the original system \eqref{eq: intro split FPE}-\eqref{eq: intro split FPE end} up to order $\mathcal{O}(\varepsilon)$ for $t\in(0,T]$. 
\end{corollary}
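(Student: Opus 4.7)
The plan is to combine two error estimates via the triangle inequality: the distance between the full Fokker--Planck solution and the Galerkin-truncated solution of (FPEt) at level $J$, and the distance between the Galerkin-truncated solution and its projection onto the slow manifold $S_\varepsilon$ supplied by the preceding theorem.

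First I would fix the Sturm--Liouville eigenbasis used to derive (FPEt) and let $\rho^{\varepsilon,J}$ denote the solution of (FPEt) with truncation level $J$ and the same (projected) initial datum as $\rho^{\varepsilon}$. Using the spectral Galerkin estimates in the spirit of Kne\v{z}evi\'{c}--S\"{u}li, together with the structural assumptions on $f,g,F,G$ that guarantee sufficient regularity of $\rho^\varepsilon$ in the fast variable, one obtains, on any compact time window $[0,T]$, an error bound of the form
\begin{equation*}
\sup_{t\in[0,T]}\,\bigl\| \rho^{\varepsilon}(\cdot,\cdot,t)-\rho^{\varepsilon,J}(\cdot,\cdot,t)\bigr\|\;\le\; C(T)\,\lambda_J^{-\alpha},
\end{equation*}
where $\lambda_J$ is the $J$-th eigenvalue of the underlying Sturm--Liouville operator and $\alpha>0$ quantifies the regularity gain. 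The crucial point I would need to verify is that the constant $C(T)$ can be chosen independently of $\varepsilon$ for $\varepsilon$ small; this requires energy/entropy estimates on $w^\varepsilon$ that exploit the contractive $\frac{1}{\varepsilon}$-fast part of \eqref{eq: intro split FPE end} rather than losing powers of $\varepsilon$ through Gronwall blow-up.

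Second, I would invoke the main theorem to obtain the slow manifold $S_\varepsilon$ for (FPEt) and the $\mathcal{O}(\varepsilon)$ Hausdorff semi-distance between $S_\varepsilon$ and the critical manifold of (FPEt). The attraction statement (locally exponentially attracting) gives that trajectories $\rho^{\varepsilon,J}(t)$ with initial data of order $\mathcal{O}(\varepsilon)$ in the fast component are, after an initial transient of length $\mathcal{O}(\varepsilon|\log\varepsilon|)$, within $\mathcal{O}(\varepsilon)$ of $S_\varepsilon$ uniformly on $[t_0,T]$ for any $t_0>0$. Combined with the convergence of the slow semi-flow on $S_\varepsilon$ to the limit semi-flow on the critical manifold, this yields
\begin{equation*}
\sup_{t\in(0,T]}\,\mathrm{dist}\bigl(\rho^{\varepsilon,J}(t),\,S_\varepsilon\bigr)\;=\;\mathcal{O}(\varepsilon).
\end{equation*}
This is exactly where the interval is $(0,T]$ rather than $[0,T]$: the initial layer obstructs a uniform estimate up to $t=0$.

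To finish, given $\varepsilon$ and $T$, I would choose the truncation level $J=J(\varepsilon,T)$ so large that $C(T)\lambda_J^{-\alpha}\le\varepsilon$; since $\lambda_J\to\infty$ this is always possible. The triangle inequality then gives
\begin{equation*}
\sup_{t\in(0,T]}\,\mathrm{dist}\bigl(\rho^{\varepsilon}(t),\,S_\varepsilon\bigr)
\;\le\;
\sup_{t\in(0,T]}\bigl\|\rho^\varepsilon(t)-\rho^{\varepsilon,J}(t)\bigr\|
+\sup_{t\in(0,T]}\mathrm{dist}\bigl(\rho^{\varepsilon,J}(t),S_\varepsilon\bigr)
\;=\;\mathcal{O}(\varepsilon),
\end{equation*}
which is the claimed approximation property. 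The main obstacle I expect is the uniform-in-$\varepsilon$ spectral Galerkin estimate: standard Galerkin arguments applied naively to \eqref{eq: intro split FPE}--\eqref{eq: intro split FPE end} pick up negative powers of $\varepsilon$ from the prefactors $\tfrac{1}{\varepsilon}\partial_x f$ and $\tfrac{\sigma^2}{2\varepsilon}\partial_{xx}(F^2\cdot)$, and removing them requires using the dissipative structure of the Sturm--Liouville operator $\QQ$-projected onto the high modes to absorb the singular terms, ideally by a weighted energy functional adapted to the eigenbasis.
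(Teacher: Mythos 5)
Your overall strategy (split the error via the triangle inequality into a Galerkin truncation error and the distance of the truncated dynamics to $S_\varepsilon$, then invoke the distance and attraction properties of the slow manifold) is the same as the paper's, which obtains the corollary by combining Proposition \ref{prop: galerkin approx} with Theorem \ref{thm: slow manifold fpe}. The mechanism you sketch for the truncation bound --- energy estimates on the difference system in which the dissipative $-\lambda_j/\varepsilon$ terms on the high modes prevent negative powers of $\varepsilon$ from appearing --- is also essentially how the paper argues.

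However, there is a genuine gap in your final step. You propose to ``choose the truncation level $J$ so large that $C(T)\lambda_J^{-\alpha}\le\varepsilon$,'' treating $J$ as a free parameter for fixed $\varepsilon$. But the slow manifold $S_\varepsilon$ to which you are measuring distances only exists at truncation level $J$ if the spectral gap condition \eqref{spectral gap condition} holds, and in its concrete form \eqref{eq: condition J spectral gap} that condition contains the sum $\sum_{j=1}^{J} L_{\mathcal{F}_j}$, which grows with $J$. Hence for fixed $\varepsilon$ there is an \emph{upper} bound on the admissible truncation level (in the linear example of Section \ref{sec: examples}, $L_{\mathcal{F}_j}\sim j^2$ forces $J=\mathcal{O}(\varepsilon^{-1/6})$). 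Taking $J$ arbitrarily large to kill the truncation error can therefore destroy the very object $S_\varepsilon$ the corollary is about. What must be shown --- and what the paper's version of the corollary asserts by selecting $\varepsilon$ and $J=J(\varepsilon)$ \emph{together}, with $\zeta=\varepsilon$ --- is that the truncation error already reaches $\mathcal{O}(\varepsilon)$ for some $J$ inside the window permitted by the spectral gap condition. Your argument is missing this compatibility check; without it the two halves of your triangle inequality cannot be guaranteed to hold simultaneously. A secondary point: the quantitative rate $\lambda_J^{-\alpha}$ you invoke is not established in the paper; Proposition \ref{prop: galerkin approx} only yields a qualitative bound $\delta(J_0)\to 0$, so an explicit rate would require an additional argument (e.g.\ decay in $j$ of the coefficients $a_{j,0}$ and of $G_i$, $G_{i,j}$, $\tilde G_{i,j}$).
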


The rest of this paper is organized as follows. We present the stochastic reduction method in Sections \ref{sec: stochastic reduction}  through \ref{sec: slow manifold in FPE}, which we divide into four parts.
In the first two steps we introduce the aforementioned projection operators $\PP$ and $\QQ$ and rewrite the Fokker-Planck equation into the fast-slow system \eqref{eq: intro split FPE}-\eqref{eq: intro split FPE end}.
In the third step we apply an eigenfunction decomposition to the functions $v^\varepsilon$ and $w^\varepsilon$ such that the coefficients appearing in the decomposition $\big(a_j\big)_{j\in \mathbb{N}}$ only depend on time $t$ and the slow spatial coordinate $y$. This allows us convert the Fokker-Planck equation to an infinite-dimensional, fast-slow system of coupled equations for the coefficients $a_j$. Section \ref{sec:abstract theory} serves as an intermezzo, where we develop the abstract theory for fast-slow partial differential equations with a fast diffusion term, based on a series of previous papers \cite{hummel2022slow,kuehn2023fast,kuehn2024infinite}. In the final step, Section \ref{sec: slow manifold in FPE}, we aim to apply this abstract theory to the fast-slow coefficient system at hand.
Here, we truncate the infinite system at a finite $J\in \mathbb{N}$.
Once we establish the existence of a slow manifold for the truncated system, we retrace the reduction process to obtain a corresponding slow manifold for the (FPEt), which is the primary contribution of this paper. In Section \ref{sec: examples}, we demonstrate the application of these results to a linear example. We conclude this paper in Section \ref{sec: discussion}, where we propose further extensions of this theory to more complex problems, such as fast-slow stochastic differential equations with L\'evy noise.

\section{Stochastic Reduction}\label{sec: stochastic reduction}

In this section we introduce the method of stochastic reduction as a starting point for the geometric understanding of a general multiscale Fokker-Planck equation (FPE).

\subsection{Problem Set-up}\label{sec: problem set up}

Let $0<\varepsilon\ll 1$ be a small parameter encoding the time-scale separation.
Then, we consider a general fast-slow stochastic differential equation in standard form 
\begin{align}\label{eq: fast-slow SDE}
\begin{split}
    \textnormal{d}x&= \frac{1}{\varepsilon} f(x,y) \textnormal{d}t +\frac{\sigma_1}{\sqrt{\varepsilon}}F(x,y)\textnormal{d}W_1,\quad x\in \mathbb{R},\\
    \textnormal{d}y&=  g(x,y) \textnormal{d}t +\sigma_2 G(x,y)\textnormal{d}W_2,\quad y\in \mathbb{R},
    \end{split}
\end{align}
where the time parameter $t$ runs over a time interval $t\in [0,T]$ for some time $T>0$. 
We assume that the non-linear drift terms
\begin{align*}
    f: \mathbb{R}\times  \mathbb{R}\to  \mathbb{R} ,\quad g: \mathbb{R}\times  \mathbb{R}\to  \mathbb{R}
\end{align*}
and the diffusion functions
\begin{align*}
    F:\mathbb{R}\times  \mathbb{R}\to  \mathbb{R} ,\quad G:\mathbb{R}\times  \mathbb{R}\to  \mathbb{R}
\end{align*}
are sufficiently regular.
Moreover, $W_i=W_i(t)$, for $i=1,2$ denote independent standard Brownian motions, i.e. $W(t)$ is a stochastic process with $W(0)=0$ and normally distributed independent increments $W(t)-W(s) \sim \mathcal{N}(0,t-s)$ for $0\leq s\leq t$.\\

From this system of stochastic differential equations we can derive the forward Kolmogorov or Fokker-Planck equation, for example via the Kramer-Moyal expansion, see \cite{van1992stochastic,risken1996fokker,gardiner2009stochastic} for the details.
The Fokker-Planck equation is then given by
\begin{align}\label{eq:fast-slow FPE}
    \begin{split}
        \partial_t \rho^\varepsilon(t,x,y)&= -\frac{1}{\varepsilon}\partial_x \big(f(x,y)\rho^\varepsilon(t,x,y)\big)+ \frac{\sigma^2}{2\varepsilon} \partial_{x,x}\big( F(x,y)^2\rho^\varepsilon(t,x,y)\big)\\
        &\quad - \partial_y \big(g(x,y)\rho^\varepsilon(t,x,y)\big)+ \frac{(\sigma')^2}{2}\partial_{y,y}\big(G(x,y)^2\rho^\varepsilon(t,x,y)\big),
    \end{split}
\end{align}
where the equation is posed on the infinite strip $\Omega=\mathbb{R}\times (-R,R)\subset \mathbb{R}^2$, for some $R>0$, 
i.e., in the $x$-direction or 'fast' direction, we consider the equation on the whole space and in the $y$-direction or 'slow' direction we consider the equation on a bounded domain $y\in (-R,R)$.
This equation can be written in the short form
\begin{align}\label{eq:FPE short form}
    \partial_t \rho^\varepsilon= \big(\frac{1}{\varepsilon}\mathfrak{L}_1+\mathfrak{L}_2 \big)\rho^\varepsilon,
\end{align}
where $\mathfrak{L}_1$ denotes the differential operator containing all partial derivatives with respect to $x$ and similar $\mathfrak{L}_2$ is the differential operator containing all partial derivatives with respect to $y$.
To close the system we introduce the following boundary and initial condition
\begin{align}\label{eq: FPE bc}
  \rho^\varepsilon(t,x,-R)&=0=\rho^\varepsilon(t,x,R)\quad  (t,x)\in (0,T]\times \mathbb{R},\\ \label{eq: FPE ic}
  \rho^\varepsilon(0,x,y)&=\rho^\varepsilon_0(x,y),\qquad (x,y)\in \mathbb{R}\times(-R,R),
\end{align}
where $\rho^\varepsilon_0$ denotes the initial distribution of the probability density.

\begin{remark}
    In this work we focus on the case that we are in a two-dimensional domain with absorbing boundary conditions, i.e. zero Dirichlet boundary conditions, in the slow direction.
    The study of the problem on a higher dimensional domain as well as other boundary conditions such as reflecting walls or periodic boundary conditions are also possible and require little change in the stochastic reduction method.
\end{remark}

% \begin{remark}
%      There are different ways to connect the boundary conditions of the Fokker-Planck equation with the stochastic system.
%      For example, one problem in stochastic differential equations is to find the first exit time, i.e. the first time when a solution path leaves a given domain.
%      This is realized by imposing absorbing boundary conditions in the corresponding Fokker-Planck equation. 
%      For more we refer to \cite{freidlin1970small,matkowsky1977exit,maier1997limiting,grasman2013asymptotic}.
%  \end{remark}

\begin{remark}
We observe that the fast-slow behaviour from the stochastic differential equation is also present in the Fokker-Planck equation, where the fast and slow variables $x$ and $y$ represent now the spatial directions.
That is, the Fokker-Planck equation exhibits a strong heterogeneity and anisotropy along these directions characterized by the scaling of the two differential operators $\mathfrak{L}_1$ and $\mathfrak{L}_2$.
\end{remark}

Taking the formal limit as $\varepsilon\to 0$  we observe that the Fokker-Planck equation \eqref{eq:fast-slow FPE} degenerates and decouples into  a part on the critical manifold, which in this case is an invariant stationary distribution with respect to the fast direction/ variable $x$, and a slow evolution along the slow direction/ variable $y$
\begin{align}
    \partial_t \rho^0&=- \partial_y (g(x,y)\rho^0)+ \frac{(\sigma')^2}{2}\partial_{y,y}\big(G(x,y)^2\rho^0\big),
    \end{align}
where $\rho^0$ is defined on the critical manifold
\begin{align}
    S_0=\{\rho^0:~0&=  -\partial_x (f(x,y)\rho^0)+ \frac{\sigma^2}{2} \partial_{x,x} \big(F(x,y)^2\rho^0\big)\}.
\end{align}

For the subsequent sections we will make the several assumptions that we collect below. 
\begin{assumption}
\
\begin{enumerate}[ label={(A\arabic*)},ref=A\arabic*]
    \item \label{ass:1} The first one concerns the dynamics of the deterministic part of fast-slow SDE \eqref{eq: fast-slow SDE}. This allows us to connect the results of the existence of slow manifolds in this paper with the ones in the deterministic setting.
    Thus, we assume that the deterministic part of system \ref{eq: fast-slow SDE} has a slow manifold. 
    To be more precise we require the function $f$ to be normally hyperbolic with respect to the fast variable $x$, i.e., if the partial derivative of $f$ with respect to the fast variables has no zero $\textnormal{D}_x f(x,y)\neq 0$ for all $x,y$ satisfying $f(x,y)=0$.

    \item \label{ass:2} The second assumption specifies a property of the differential operator $\mathfrak{L}_1$ introduced in \eqref{eq:FPE short form}.
    We assume that the (singular) Sturm-Liouville problem
    \begin{align}\label{eq: sturm liouville}
        \mathfrak{L}_1 \varphi(x)= -\lambda \varphi(x)
    \end{align}
    is well-posed in $L^2(\mathbb{R})$, where $-\lambda\in \mathbb{C}$ is an eigenvalue of the operator $\mathfrak{L}_1$. 
    Moreover, we assume that the real part of the spectrum of the operator $-\mathfrak{L}_1$ is contained in $(-\infty,0]$ and that the corresponding eigenfunctions form an orthonormal basis $\{\phi_j\}_{j\in \mathbb{N}}$ of $L^2(\mathbb{R})$.

    \item \label{ass: solvability FPE}
    We assume that the diffusion terms $F,G$ in \eqref{eq: fast-slow SDE} are constant, i.e., we only have additive noise in the stochastic system. In addition we assume that the drift functions $f,g$ satisfy 
    $f,g\in C^2(\mathbb{R}\times [-R,R])$ such that $f,g,\nabla f,\nabla g$ are bounded in $\mathbb{R}\times [-R,R]$.
\end{enumerate}
\end{assumption}

% \begin{assumption}\label{ass:1}
%     We assume that the deterministic part of system \ref{eq: fast-slow SDE} has a slow manifold, i.e., we require the function $f$ to be normally hyperbolic with respect to the fast variable $x$. 
% \end{assumption}
% The second assumption specifies a property of the differential operator $\mathfrak{L}_1$ introduced in \eqref{eq:FPE short form}.
% \begin{assumption}\label{ass:2}
%     We assume that the (singular) Sturm-Liouville problem
%     \begin{align*}
%         \mathfrak{L}_1 \varphi(x)= -\lambda \varphi(x)
%     \end{align*}
%     is well-posed, where $\lambda\in \mathbb{C}$ is an eigenvalue of the operator $\mathfrak{L}_1$.
%     Moreover, we assume that the real part of the spectrum of the operator $-\mathfrak{L}_1$ is contained in $(-\infty,0]$ and that the corresponding eigenfunctions form an orthonormal basis $\{\phi_j\}_{j\in \mathbb{N}}$ of $L^2(\Omega_x)$.
%     In addition, we assume that $\lambda=0$ is a simple discrete eigenvalue with corresponding eigenfunction $\phi_0(x)$.
%     For details on the Sturm-Liouville operator we refer to \cite{weidmann2006spectral,kapitula2013spectral}.
%     % Moreover, we assume that the eigenvalue problem has a discrete spectrum $\{\lambda_j\}_{j\in \mathbb{N}}$ with a corresponding orthonormal basis $\{\phi_j\}_{j\in \mathbb{N}}$ of the weighted function space $X_\omega$.
% \end{assumption}
% \begin{remark}
%     In the higher dimensional case this assumption corresponds to the solvability of an elliptic eigenvalue problem on the whole space .
% \end{remark}

\begin{remark}
    The Assumption \ref{ass:2} can also be phrased in terms of an ergodicity condition on the stochastic differential equation \eqref{eq: fast-slow SDE}, which holds when the diffusion matrix $F(x,y)$ is strictly positive-definite. See \cite{mattingly2002ergodicity,pavliotis2008multiscale} for further details.
    However, the formulation via the Sturm-Liouville problem allows for an explicit computation of the eigenbasis, which we perform for a linear example in Section \ref{sec: examples}.
\end{remark}

% \begin{assumption}\label{ass: solvability FPE}
% We assume that the diffusion terms $F,G$ in \eqref{eq: fast-slow SDE} are constant, i.e., we only have additive noise in the stochastic system, and that drift functions $f,g\in C_b^1((-R,R))$. Moreover, we set the spatial dimensions $n=m=1$.
% \end{assumption}

Then, the existence of solutions to the Fokker-Planck equation \eqref{eq:fast-slow FPE}, depending on the domain $\Omega$ and the regularity of the initial data $\rho_0$, is a well-studied result see \cite{cerrai2001second,bogachev2022fokker} and the references therein.\\
% We denote $\rho^\varepsilon \in \mathcal{X}$ the unique solution of the fast-slow Fokker-Planck equation, where $\mathcal{X}$ is a suitable Banach space for the solution and  will be fixed later.\\

Now, the aim is to derive a reduced Fokker–Planck equation while maintaining the geometric structure of the original equation.
The major steps in deriving this reduced Fokker–Planck equation:
\begin{itemize}
    \item[(S1)] Define a projection operator onto the $y$-variables using the stationary distribution of $x$.
    \item[(S2)] Apply the projections and  use an eigenfunction decomposition of the probability distribution $\rho$ to eliminate the $x$-variable.
    \item[(S3)] Derive an infinite dimensional fast-slow system for the coefficient functions, coming from the eigenfunction decomposition, which depend only on the slow direction $y$ and time $t$.
    \item[(S4)] Apply the slow manifold theory for fast-slow PDEs in this setting and truncate the dimension of the fast-slow system to derive the reduced dynamics.
\end{itemize}

\subsection{Step (S1): Projection Operators}\label{sec: Step 1}

The first step in the stochastic reduction method is to split the Fokker-Planck equation \eqref{eq:fast-slow FPE} into a fast and a slow component.
To this end we introduce the projection operator $\PP:L^2(\mathbb{R})\to L^2(\mathbb{R})$ defined by
\begin{align}
    \PP \rho(t,x,y)= p_s(x) \int_{\mathbb{R}} \rho(t,\tilde x,y)\,\textnormal{d} x,
\end{align}
where $p_s(x)$ is the so-called stationary density of the system and solves
\begin{align}\label{eq: stationary distr}
    \mathfrak{L}_1 p_s(x)=0,\quad \int_{\mathbb{R}} p_s(x)\,\textnormal{d} x=1.
\end{align}
In the framework of geometric singular perturbation theory we call the stationary distribution the critical manifold of the system.
Thus, the operator $\PP$ projects onto the null space of the differential operator $\mathfrak{L}_1$ and thereby eliminates the fast term in the Fokker-Planck equation.
Similar, we introduce its complement $\QQ:L^2(\mathbb{R})\to L^2(\mathbb{R})$ as $\QQ=\textnormal{I}-\PP$.

\begin{remark}\label{rmk: stationary distr}
Note, that we can explicitly compute the stationary distribution $p_s(x)$.
Taking into account the boundary condition of the Fokker-Planck equation at infinity this reduces to solving a first order ODE in the $x$-component, where $y$ is seen as a parameter
\begin{align*}
    \frac{\textnormal{d}}{\textnormal{d}x}p_s(x)= -\frac{2f(x,y)}{\sigma_1^2}  p_s(x) = \psi(x,y) p_s(x).
\end{align*}
Denoting by $\Psi(x,y)$ the primitive of $\psi(x,y)$ with respect to the variable $x$ we obtain
\begin{align}\label{eq: stationary distr 2}
    p_s(x)= c\, \textnormal{e}^{\Psi(x,y)},
\end{align}
where $c$ is chosen according to the mass constraint \eqref{eq: stationary distr}.
It follows that $p_s(x)\geq 0$ for all $(x,y)\in \Omega$.
\end{remark}

Before applying the projections $\PP$ and $\QQ$ to the Fokker-Planck equation we make the following observations on the properties of these projection operators.
\begin{align*}
    \PP^2 \rho(x,y)= p_s(x) \int_{\mathbb{R}} \int_{\mathbb{R}} p_s(\tilde x)\rho(\bar x,y) \,\textnormal{d}\bar x \,  \textnormal{d}\tilde x= p_s(x)\int_{\mathbb{R}} \rho(\bar x,y)\,\textnormal{d}\bar x = \PP \rho(x,y) ,
\end{align*}
i.e., $\PP^2=\PP$.
From this we can further deduce that $\QQ^2=\QQ$ and that $\QQ\PP=\PP\QQ=0$.
In addition,
\begin{align*}
    \PP \mathfrak{L}_1 \rho(x,y)&= p_s(x) \int_{\mathbb{R}}  \mathfrak{L}_1 \rho(\tilde x,y)\, \textnormal{d}\tilde x = p_s(x)\int_{\mathbb{R}} \partial_{\tilde x}\big( \partial_{\tilde x}\frac{1}{2}\sigma^2-f(\tilde x,y)\big)\rho(\tilde x,y)\,\textnormal{d}\tilde x =0,
\end{align*}
where we used the natural boundary conditions for $\rho$ in $x$-direction.
Similarly, we have
\begin{align*}
    \mathfrak{L}_1 \PP \rho(x,y)= \mathfrak{L}_1 p_s(x)\int_{\mathbb{R}}  \rho(\tilde x,y)\, \textnormal{d}\tilde x=0
\end{align*}
by the definition of $p_s$.
Hence, $\PP \mathfrak{L}_1=\mathfrak{L}_1\PP=0$.
For the action of $\PP$ on the operator $\mathfrak{L}_2$ we observe that
\begin{align*}
  \PP  \mathfrak{L}_2 \PP \rho(x,y)&= p_s(x) \int_{\mathbb{R}} \int_{\mathbb{R}}  \mathfrak{L}_2\big( p_s(\tilde x)  \rho(\bar x,y)\,\textnormal{d} \bar x\big) \,  \textnormal{d}\tilde x\\
  &= p_s(x) \int_{\mathbb{R}} \int_{\mathbb{R}} \big(\frac{\sigma_2^2}{2}\partial_{y,y}-\partial_y g(\tilde x,y)\big) \big(p_s(\tilde x) \rho(\bar x,y)\big) \,\textnormal{d} \bar x \,\textnormal{d}\tilde x \\
  &=p_s(x) \int_{\mathbb{R}} \int_{\mathbb{R}}\rho(\bar x,y) \big(\frac{\sigma_2^2}{2}\partial_{y,y}-\partial_y g(\tilde x,y)\big) \big(p_s(\tilde x)\big)  \,  \textnormal{d} \bar x \, \textnormal{d}\tilde x\\
  &\quad +p_s(x)  \int_{\mathbb{R}}  \frac{\sigma_2^2}{2}\partial_{y,y}\rho(\bar x,y)\, \textnormal{d} \bar x - p_s(x) \int_{\mathbb{R}} \int_{\mathbb{R}} p_s(\tilde x)\rho(\bar x,y)\partial_y \big(g(\tilde x,y)\big)   \, \textnormal{d} \bar x \,\textnormal{d}\tilde x \\
  &\quad - p_s(x) \int_{\mathbb{R}} \int_{\mathbb{R}} p_s(\tilde x) g(\tilde x,y) \partial_y \rho(\bar x,y) \, \textnormal{d} \bar x \,\textnormal{d}\tilde x ,
\end{align*}
where the first term only equals to zero when $f(x,y)=f(x)$, i.e. the solution of $\mathfrak{L}_1 p_s(x)$ does not depend on the parameter $y$.
We note that the last three terms represent an averaging over the $x$-direction while applying the differential operator $\mathfrak{L}_2$.

The last observation we make is that both $\PP$ and $\QQ$ commute with the partial time derivative as the projections do not depend on the time.

\subsection{Step (S2): Eigenfunction Decomposition} \label{sec: Step 2}

Setting $\PP\rho=v$ and $\QQ\rho=w$ we can apply the projection operators $\PP$ and $\QQ$ to the Fokker-Planck equation and obtain
\begin{align}\label{eq: v equation}
    \partial_t v^\varepsilon&= \PP \mathfrak{L}_2 (v^\varepsilon+w^\varepsilon),\\ \label{eq: w equation}
    \partial_t w^\varepsilon&= \frac{1}{\varepsilon}\mathfrak{L}_1 w^\varepsilon+ \QQ \mathfrak{L}_2(v^\varepsilon+w^\varepsilon).
\end{align}

In the language of fast-slow systems we call $w^\varepsilon$ the fast variable and $v^\varepsilon$ the slow variable.
Taking the formal limit $\varepsilon\to 0$ in the above system yields
\begin{align}
    \partial_t v^0&= P\mathfrak{L}_2 v^0,\\
    w^0 &=0.
\end{align}
The last equation follows from the fact that in the limit $w^0$ has to satisfy $\mathfrak{L}_1 w^0=0$. 
This can only be the case if $w^0=0$, since $w^0=\QQ \rho^0$ and $\QQ$ is orthogonal to the null space of $\mathfrak{L}_1$.
Hence, $w^\varepsilon$ can be understood as an additive perturbation to the invariant density or critical manifold $p_s$.\\

Moreover, we observe that we can rewrite the $v^0$-equation such that only the $y$-component remains.
That is, 
\begin{align*}
    0=p_s(x)\int_{\mathbb{R}}  \big(\partial_t-\mathfrak{L}_2 \big)v^0(t,\tilde x,y)\, \textnormal{d}\tilde x = p_s(x)\big(\partial_t-\bar {\mathfrak{L}}_2\big) \bar v^0(t,y),
\end{align*}
where $\bar{\mathfrak{L}}_2$ denotes the averaged (with respect to the fast direction) differential operator $\mathfrak{L}_2$ and where $\bar v^0(t,y)=\int_{\mathbb{R}} v^0(t,\tilde x,y)\, \textnormal{d}\tilde x$ is the marginal density.
Hence, we have reduced the full system to an equation for the slow variable
\begin{align}
    \big(\partial_t-\bar {\mathfrak{L}}_2\big) \bar v^0(t,y)=0.
\end{align}

% Next, we further investigate the operator $\mathfrak{L}_1$.
% \begin{assumption}\label{ass:2}
%     We assume that the (singular) Sturm-Liouville problem
%     \begin{align*}
%         \mathfrak{L}_1 \varphi(x)= -\lambda \varphi(x)
%     \end{align*}
%     is well-posed, where $\lambda\in \mathbb{C}$ is an eigenvalue of the operator $\mathfrak{L}_1$.
%     Moreover, we assume that the real part of the spectrum of the operator $-\mathfrak{L}_1$ is contained in $(-\infty,0]$ and that the corresponding eigenfunctions form an orthonormal basis $\{\phi_j\}_{j\in \mathbb{N}}$ of the function space $L^2(\mathbb{R})$.
%     In addition, we assume that $\lambda=0$ is a simple discrete eigenvalue with corresponding eigenfunction $\phi_0(x)$.
%     For details on the Sturm-Liouville operator we refer to \cite{weidmann2006spectral,kapitula2013spectral}.
%     % Moreover, we assume that the eigenvalue problem has a discrete spectrum $\{\lambda_j\}_{j\in \mathbb{N}}$ with a corresponding orthonormal basis $\{\phi_j\}_{j\in \mathbb{N}}$ of the weighted function space $X_\omega$.
% \end{assumption}
% \begin{remark}
%     In the higher dimensional case this assumption corresponds to the solvability of an elliptic eigenvalue problem on the whole space .
% \end{remark}

% \begin{remark}
%     The above assumption can also be phrased in terms of an ergodicity condition on the stochastic differential equation \eqref{eq: fast-slow SDE}, which holds when the matrix $F(x,y)$ is strictly positive-definite \cite{mattingly2002ergodicity,pavliotis2008multiscale}.
% \end{remark}
Next, we further investigate the properties of the operator $\mathfrak{L}_1$, where under the assumptions of \ref{sec: problem set up}, in particular Assumption \ref{ass:2}, we have that there exists an eigenfunction decomposition in $x$-direction of the probability density $\rho(t,x,y)$, which we write as 
\begin{align}\label{eq: eigenfunction sum}
    \rho(t,x,y)= \sum_{j\in \mathbb{N}} a_j(t,y) \phi_j^y(x),
\end{align}
where $\phi_j^y(x)$ is the eigenfunction corresponding to the eigenvalue $\lambda_j$ of the Sturm-Liouville equation \eqref{eq: sturm liouville} and $j\in \mathbb{N}=0,1,2,\dots$.
Here, the superscript denotes the parametric dependence on the variable $y$ of the eigenfunctions $\phi_j^y(x)$.
Note, that we can express the stationary density $p_s(x)$ in terms of the eigenfunction $\phi_0^y(x)$ by $p_s(x)=\phi_0^y(x)^2$.

Applying the projections $\PP$ and $\QQ$ from step (S1) to the eigenfunction decomposition \eqref{eq: eigenfunction sum} we obtain that
\begin{align}\label{eq: fast slow decomposition}\begin{split}
    v(t,x,y)&=a_{0}(t,y) \phi^y_0(x) , \\
    w(t,x,y)&=\sum_{j\in \mathbb{N}\setminus \{0\}} a_{j}(t,y) \phi^y_j(x) ,\end{split}
\end{align}
where we study the coefficient functions $(a_j(t,y))_{j\in \mathbb{N}}$ in the next step.

\subsection{Step (S3): Coefficient System} \label{sec: Step 3}

The idea in this section is to obtain a system of PDEs for the coefficient functions $a_0=a_0(t,y)$ and $a_j=a_j(t,y)$ for $j=1,2,3,\dots$, derived from the fast-slow Fokker-Planck equation \eqref{eq:fast-slow FPE}.

\begin{lemma}
    The coefficient functions derived from the splitting of the fast-slow Fokker-Planck equation \eqref{eq:fast-slow FPE} satisfy the following system of equations
    \begin{align}\label{eq:coefficient slow}
        \begin{split}
            \partial_t   a_0(t,y)&=C_0\frac{\sigma_2^2}{2}\partial_{y,y} a_0(t,y) -  \partial_y (G_{0}(y)  a_0(t,y)) -  \sum_{i \in \mathbb{N}\setminus \{0\}} \partial_y(G_{i}(y) a_{i}(t,y)),
        \end{split} 
        \end{align}
        \begin{align}\label{eq:coefficient fast}
        \begin{split}
              \partial_t a_j(t,y)  &= \frac{\sigma_2^2}{2}  \partial_{y,y} a_j(t,y) - \frac{\lambda_j}{\varepsilon}a_j(t,y)  - \partial_y \big( G_{0,j}(y) a_0(t,y)\big) - \sum_{i \in \mathbb{N}\setminus \{0\}}\partial_y \big( G_{i,j}(y) a_i(t,y)\big)\\
        &\quad + \tilde G_{0,j}(y)a_0(t,y)+ \sum_{i \in \mathbb{N}\setminus \{0\}}\tilde G_{i,j}(y) a_i(t,y),
        \end{split}
    \end{align}
where the functions $G(y),\tilde G(y)$ will be defined in the proof.
\end{lemma}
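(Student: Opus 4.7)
The plan is to substitute the eigenfunction expansion \eqref{eq: fast slow decomposition} into the split system \eqref{eq: v equation}-\eqref{eq: w equation} and then project onto each basis element $\phi_j^y$ using the $L^2(\mathbb{R})$-orthonormality granted by Assumption \ref{ass:2}. Under Assumption \ref{ass: solvability FPE} the diffusion term $G$ is constant, so $\mathfrak{L}_2 \rho = -\partial_y(g(x,y)\rho) + \frac{\sigma_2^2}{2}\partial_{yy}\rho$. The key subtlety throughout is that $\phi_j^y(x)$ depends on $y$ parametrically, so $\partial_y[a_i(t,y)\phi_i^y(x)] = (\partial_y a_i)\phi_i^y + a_i (\partial_y \phi_i^y)$ and analogously for $\partial_{yy}$. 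These extra terms, after integration against $g$ or against basis elements, are precisely what produce the zeroth-order coefficients $\tilde G_{i,j}(y)$ appearing in \eqref{eq:coefficient fast}.

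For the slow equation, I apply the definition $\PP\mathfrak{L}_2 \rho = p_s(x) \int_{\mathbb{R}} \mathfrak{L}_2 \rho(\tilde x)\, d\tilde x$, exchange $\partial_y$ with the $\tilde x$-integral, and identify the left-hand side as $\partial_t a_0(t,y)\cdot \phi_0^y(x)$. Projecting by integrating against $\phi_0^y$ in $x$ kills all $i\geq 1$ contributions except through the coupling integrals $\int g(\tilde x,y)\phi_i^y(\tilde x)\,d\tilde x$, which I define to be $G_i(y)$ up to appropriate normalization factors. The constant $C_0$ absorbs the $x$-integral $\int p_s(\tilde x)\,d\tilde x = 1$ together with any remaining normalization arising from the relation between $\phi_0^y$ and $p_s$ described in Section \ref{sec: Step 2}.

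For the fast equations, I multiply \eqref{eq: w equation} by $\phi_j^y(x)$ for $j\geq 1$ and integrate in $x$. Orthonormality gives $\int w^\varepsilon \phi_j^y \, dx = a_j$, while $\mathfrak{L}_1 w^\varepsilon = -\sum_{i\geq 1}\lambda_i a_i \phi_i^y$ produces the stiff term $-\frac{\lambda_j}{\varepsilon}a_j$. Since $\phi_j^y$ is orthogonal to the range of $\PP$ for $j\geq 1$, the projector $\QQ$ may be dropped inside the $\phi_j^y$-integral. Expanding $\mathfrak{L}_2(a_i \phi_i^y)$ via the product rule in $y$ and collecting terms yields, for each $i$, a divergence-form contribution $-\partial_y(G_{i,j}(y)a_i)$ with $G_{i,j}(y):=\int g(\tilde x,y)\phi_i^y(\tilde x)\phi_j^y(\tilde x)\,d\tilde x$ (and analogously $G_{0,j}$), together with a zeroth-order contribution $\tilde G_{i,j}(y)\,a_i$ absorbing the remaining terms originating from $\partial_y \phi_i^y$, $\partial_{yy}\phi_i^y$, and $\partial_y g$ paired against $\phi_j^y$.

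The main obstacle is the bookkeeping of the $y$-derivatives of the parametric eigenfunctions: it is routine but tedious to track all product-rule contributions and repackage them into the single coefficients $G_{i,j}(y)$ and $\tilde G_{i,j}(y)$. A secondary technical point is the interchange of the infinite sum with $\partial_y$, $\partial_{yy}$, and $\int dx$; this is justified by the smoothness and boundedness stipulated in Assumption \ref{ass: solvability FPE}, together with the decay of the spectral coefficients $a_i(t,\cdot)$ inherited from the Fokker-Planck regularity theory referenced at the end of Section \ref{sec: problem set up}.
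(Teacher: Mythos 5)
Your proposal follows essentially the same route as the paper: substitute the eigenfunction expansion into the projected system, integrate in $x$ using orthonormality together with the relations $\int_{\mathbb{R}}\phi_0^y(x)^2\phi_j^y(x)\,\textnormal{d}x=0=\int_{\mathbb{R}}\phi_j^y(x)\,\textnormal{d}x$ for $j\geq 1$, and absorb the product-rule terms coming from the parametric $y$-dependence of $\phi_j^y$ into the coefficients $G_i$, $G_{i,j}$, $\tilde G_{i,j}$. The only cosmetic deviation is in the slow equation, where the paper simply integrates over $x$ (which is what produces $G_i(y)=\int_{\mathbb{R}} g\,\phi_i^y\,\textnormal{d}x$ and the constant $C_0$) rather than testing against $\phi_0^y$; this affects only normalization constants, not the structure of the argument.
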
 

\begin{proof}
We start with the decomposition of the probability density $\rho^\varepsilon$ in equation \eqref{eq: fast slow decomposition} and plug it into the system \eqref{eq: v equation}-\eqref{eq: w equation}.
Then, integrating the slow equation \eqref{eq: v equation} with with respect to the fast direction $x$ yields 
\begin{align*}
  \partial_t   a_0(t,y)=  \int_{\mathbb{R}} \mathfrak{L}_2 \bigg[a_0(t,y)\phi^y_0(x)   +\sum_{i \in \mathbb{N}\setminus \{0\}} a_{i}(t,y) \phi^y_i(x)  \bigg]\,\textnormal{d} x,
\end{align*}
and for the fast equation \eqref{eq: w equation} multiplying it with $\phi_j(x)$ and integrating with respect to $x$, yields
\begin{align*}
\begin{split}
    \partial_t a_j(t,y) &=- \frac{\lambda_j}{\varepsilon}a_j(t,y)+ \int_{\mathbb{R}} \mathfrak{L}_2\bigg( a_0(t,y) \phi^y_0(x)  +\sum_{i \in \mathbb{N}\setminus \{0\}} a_{i}(t,y) \phi^y_i(x)  \bigg)\phi_j^y(x)\,\textnormal{d} x\\
    &\quad -\int_{\mathbb{R}} \int_{\mathbb{R}}  \phi_0^y(x)^2\phi_j^y(x)  \mathfrak{L}_2 \bigg(a_0(t,y)\phi^y_0(\tilde x)   +\sum_{i \in \mathbb{N}\setminus \{0\}} a_{i}(t,y) \phi^y_i(\tilde x)  \bigg)\,\textnormal{d}\tilde x\,\textnormal{d} x,
    \end{split}
\end{align*}
where $\lambda_j$ is the eigenvalue corresponding to the eigenfunction $\phi_j^y(x)$.

In the next step we need to analyze this system further.
% We assume that $g(x,y)$ and $\partial_y g(x,y)$ can be written as
% \begin{align*}
%     g(x,y)= \sum_{j\in \mathbb{N}} \gamma_j(y)\phi^y_j(x),
% \end{align*}
% and
% \begin{align*}
%     \partial_y g(x,y)= \sum_{j\in \mathbb{N}} \big( \gamma_j'(y)\phi^y_j(x) +\gamma_j(y)\partial_y \phi_j^y(x)\big),
% \end{align*}
% respectively.
We note, that we can interchange the order of differentiation with respect to $y$ and integration with respect to $x$ in the above equations.
Moreover, we define for all $k,j \in \mathbb{N}$
\begin{align}\label{eq:definition G}
\begin{split}
      C_0&=\int_{\mathbb{R}}\phi_0^y(x)\,\textnormal{d} x, \quad G_{k}(y)=\int_{\mathbb{R}} g(x,y)  \phi_k^y(x) \,\textnormal{d} x,\\
      G_{k,j}(y)&= \int_{\mathbb{R}} g(x,y)  \phi_k^y(x)\phi_j(x)\,\textnormal{d} x  ~\textnormal{ and }~ \tilde G_{k,j} (y)=  \int_{\mathbb{R}} g(x,y)  \phi_k^y(x) \partial_y \phi_j(x)\,\textnormal{d} x.
\end{split}
\end{align}
Using that $p_s(x)= \phi_0^y(x)^2$ it follows from remark \ref{rmk: stationary distr} that $C_0>0$.
Moreover, we observe that 
\begin{align*}
    \int_{\mathbb{R}}   \phi_0^y(x)^2\phi_j^y(x)\, \textnormal{d}x= 0,\quad \textnormal{and}\quad \int_{\mathbb{R}}\phi_j^y(x)\,\textnormal{d} x=0 
\end{align*}
for all $j\in \mathbb{N}\setminus\{0\}$.
This follows from the properties of the spectral eigenfunctions $\phi_j^y(x)$, in particular from the fact that $\{\phi_j\}_{j\in \mathbb{N}}$ form an orthonormal basis in $L^2(\mathbb{R})$.
Then, we obtain 
\begin{align*}
   \partial_t   a_0(t,y)&=  \int_{\mathbb{R}} \big(\frac{\sigma_2^2}{2}\partial_{y,y}-g(x,y)\partial_y- \partial_y g(x,y)\big) \bigg(a_0(t,y)\phi^y_0(x)   +\sum_{i \in \mathbb{N}\setminus \{0\}} a_{i}(t,y) \phi^y_i(x)  \bigg)\,\textnormal{d} x \\
   &= \frac{\sigma_2^2}{2}\partial_{y,y}\bigg(C_0 a_0(t,y) + \sum_{i \in \mathbb{N}\setminus \{0\}} a_{i}(t,y) \int_{\mathbb{R}}\phi^y_i(x)  \,\textnormal{d} x\bigg)\\
   &\quad - \partial_y   a_0(t,y) \int_{\mathbb{R}} g(x,y)\phi^y_0(x) \,\textnormal{d} x -  \sum_{i \in \mathbb{N}\setminus \{0\}}\partial_y a_{i}(t,y)\int_{\mathbb{R}} g(x,y)  \phi^y_i(x) \,\textnormal{d} x\\
   &\quad - a_0(t,y)\int_{\mathbb{R}} g(x,y) \partial_y (\phi^y_0(x)  )\,\textnormal{d} x-\sum_{i \in \mathbb{N}\setminus \{0\}} a_{i}(t,y)\int_{\mathbb{R}} g(x,y)  \partial_y (\phi^y_i(x) )\,\textnormal{d} x\\
    &\quad - a_0(t,y)\int_{\mathbb{R}}\partial_y g(x,y)\phi^y_0(x)\,\textnormal{d} x  - \sum_{i \in \mathbb{N}\setminus \{0\}} a_{i}(t,y)\int_{\mathbb{R}}  \partial_y g(x,y)  \phi^y_i(x) \,\textnormal{d} x.
\end{align*}
Further reduction of the above equation yields
\begin{align*}
\begin{split}
     \partial_t   a_0(t,y)&=C_0\frac{\sigma_2^2}{2}\partial_{y,y} a_0(t,y) -  \partial_y (G_{0}(y)  a_0(t,y)) -  \sum_{i \in \mathbb{N}\setminus \{0\}} \partial_y(G_{i}(y) a_{i}(t,y)).
     \end{split}
\end{align*}
Similar we obtain
\begin{align*}
    \partial_t a_j(t,y)  &=- \frac{\lambda_j}{\varepsilon}a_j(t,y) + \int_{\mathbb{R}} \mathfrak{L}_2\bigg( a_0(t,y) \phi^y_0(x)  +\sum_{i \in \mathbb{N}\setminus \{0\}} a_{i}(t,y) \phi^y_i(x)  \bigg) \phi_j(x)\,\textnormal{d} x.
\end{align*}
This equation can be simplified as
\begin{align*}
    \begin{split}
        \partial_t a_j(t,y)  &= \frac{\sigma_2^2}{2}  \partial_{y,y} a_j(t,y) - \frac{\lambda_j}{\varepsilon}a_j(t,y)  - \partial_y \big( G_{0,j}(y) a_0(t,y)\big) - \sum_{i \in \mathbb{N}\setminus \{0\}}\partial_y \big( G_{i,j}(y) a_i(t,y)\big)\\
        &\quad + \tilde G_{0,j}(y)a_0(t,y)+ \sum_{i \in \mathbb{N}\setminus \{0\}}\tilde G_{i,j}(y) a_i(t,y).
    \end{split}
\end{align*}
\end{proof}
To close the system, we recall the boundary and initial conditions from the original system.
In this setting the absorbing or zero Dirichlet boundary conditions are given by
\begin{align}\label{eq: coefficient bc absorbing}
     a_j(t,-R)=0=a_j(t,R),\quad t\in (0,T] \textnormal{ and for all } j\in \mathbb{N},
\end{align}
and the initial conditions are given by
\begin{align}
    \label{eq: coefficient ic}
    a_j(0,y)=a_{j,0}(y), \quad y\in (-R,R) \textnormal{ and for all } j\in \mathbb{N},
\end{align}
where the initial data relates back to the initial data from the original Fokker-Planck equation via
\begin{align*}
    \sum_{j\in \mathbb{N}}a_{j,0}(y)\phi_j^y(x)=\rho_0(x,y).
\end{align*}
Hence we have transformed the Fokker-Planck equation into a fast-slow system with linear leading order.

Again, we can observe that in the formal limit as $\varepsilon\to 0$ we obtain $a_j(t,y)=0$ for all $j\in \mathbb{N}\setminus \{0\}$, which corresponds to $w(t,x,y)=0$.

\begin{remark}
Note that the system of equations \eqref{eq:definition G}-\eqref{eq: coefficient ic} is a system of infinitely many coupled PDEs for the coefficient functions $a_j$ with $j\in \mathbb{N}$.   
\end{remark}
\begin{remark}
    In Section \ref{sec: examples} we will consider examples, where we can further reduce the above system . In particular, equation \eqref{eq:definition G} can be simplified, for example, when $g(x,y)=g(y)$ or when we have more information on the eigenfunctions $\{\phi_j\}_{j\in \mathbb{N}}$.
\end{remark}

% \begin{assumption}\label{ass: solvability FPE}
% We assume that the diffusion terms in \ref{eq: fast-slow SDE} are constant and that drift functions $f,g\in C_b^1((-R,R))$.
% \end{assumption}

\begin{lemma}\label{lemma: general existence coef}
    Under the assumptions of Section \ref{sec: problem set up} the system \eqref{eq:coefficient slow}-\eqref{eq:coefficient fast} together with equations \eqref{eq:definition G}-\eqref{eq: coefficient ic} is well-posed and the solutions satisfy 
    $$a_j(t,y)\in C^{1}((0,T);L^2(-R,R))\cap C([0,T);H^2(-R,R))\quad \text{for all~~} j\in \mathbb{N} \text{ and any fixed }~T>0.$$
\end{lemma}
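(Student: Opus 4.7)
The cleanest route is to exploit the fact that the coefficient system is nothing but a spectral reformulation of the Fokker-Planck equation \eqref{eq:fast-slow FPE}, for which well-posedness is already classical. My plan is therefore to prove well-posedness by transferring regularity back and forth between $\rho^\varepsilon$ and the sequence $(a_j)_{j\in\mathbb{N}}$, and to use the explicit dissipation provided by the eigenvalues $\lambda_j$ together with Assumption \ref{ass: solvability FPE} to tame the infinite coupling.

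First I would invoke the standard existence and regularity theory for second-order linear parabolic equations on the strip $\Omega = \mathbb{R}\times(-R,R)$. Under Assumption \ref{ass: solvability FPE} the coefficients of \eqref{eq:fast-slow FPE} are in $C^2$ and bounded together with their first derivatives, the diffusion is additive and uniformly elliptic, and the slow direction carries zero Dirichlet data, so the results cited in the excerpt (Cerrai and Bogachev et al.) yield a unique solution $\rho^\varepsilon \in C^1((0,T); L^2(\Omega))\cap C([0,T); H^2(\Omega))$ provided the initial datum $\rho_0$ lies in $H^2(\Omega)$ and is compatible with the boundary condition. Then I would define
\begin{align*}
a_j(t,y) := \int_{\mathbb{R}} \rho^\varepsilon(t,x,y)\,\phi_j^y(x)\,\textnormal{d}x,
\end{align*}
which is meaningful by Assumption \ref{ass:2} (orthonormality of $\{\phi_j^y\}$ in $L^2(\mathbb{R})$). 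Parseval gives
\begin{align*}
\sum_{j\in\mathbb{N}} |a_j(t,y)|^2 \;=\; \|\rho^\varepsilon(t,\cdot,y)\|_{L^2(\mathbb{R})}^2,
\end{align*}
so that the series \eqref{eq: eigenfunction sum} converges in $L^2(\mathbb{R})$ pointwise in $(t,y)$; regularity in $t$ and $y$ of each $a_j$ is inherited from that of $\rho^\varepsilon$ by differentiation under the integral, provided the $y$-derivatives of $\phi_j^y$ are controlled.

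Next I would verify that the sequence $(a_j)$ indeed solves the system \eqref{eq:coefficient slow}--\eqref{eq:coefficient fast} with data \eqref{eq: coefficient bc absorbing}--\eqref{eq: coefficient ic}: this is exactly the computation already carried out in the derivation of the system, now rigorously justified because the convergence of \eqref{eq: eigenfunction sum} in $L^2(\mathbb{R})$ together with boundedness of $g$ and $\nabla g$ lets one exchange $\sum_j$ with the integrals defining $G_k$, $G_{k,j}$, $\tilde G_{k,j}$. Uniqueness for the coefficient system follows from the reverse direction: given any solution $(\tilde a_j)$, the sum $\sum_j \tilde a_j(t,y)\phi_j^y(x)$ reconstructs a solution of \eqref{eq:fast-slow FPE} in the energy class, which must coincide with $\rho^\varepsilon$ by uniqueness for the FPE. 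For the quantitative regularity $C^1((0,T);L^2(-R,R))\cap C([0,T);H^2(-R,R))$ for each fixed $j$, I would apply standard $L^2$-parabolic regularity to the scalar equation for $a_j$: its right-hand side, once $\rho^\varepsilon$ is known, is a source in the correct class of spaces, and the operator $\frac{\sigma_2^2}{2}\partial_{yy} - \frac{\lambda_j}{\varepsilon}\mathrm{Id}$ with Dirichlet BC on $(-R,R)$ is self-adjoint and generates an analytic semigroup.

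The genuine obstacle is not the parabolic machinery but the control of the parametric $y$-dependence of the Sturm-Liouville eigenfunctions $\phi_j^y$ and of the coupling coefficients $G_k,G_{k,j},\tilde G_{k,j}$. In particular $\tilde G_{k,j}(y)$ involves $\partial_y \phi_j^y$, so I would need to establish, using perturbation theory for the $y$-parametrized operator family $\mathfrak{L}_1$ (smoothness of $f$ in $y$ from Assumption \ref{ass: solvability FPE}), that $y\mapsto \phi_j^y$ is $C^1$ into $L^2(\mathbb{R})$ and that the resulting coefficient functions are uniformly bounded in $y\in[-R,R]$ and summable enough in $j$ to match the $L^2$-summability of $(a_j)$. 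Granted this, a Gronwall argument on $\sum_j \|a_j(t,\cdot)\|_{L^2(-R,R)}^2$, in which the dissipation $-\lambda_j/\varepsilon\,|a_j|^2$ absorbs the off-diagonal coupling for $j\geq 1$, closes the estimates and delivers well-posedness at the claimed regularity.
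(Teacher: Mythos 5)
Your proposal is correct and follows essentially the same route as the paper: the paper's proof simply observes that the coefficient system arises from the Fokker--Planck equation by equivalent transformations, so well-posedness reduces to the classical parabolic theory for \eqref{eq:fast-slow FPE} on the strip under Assumption \ref{ass: solvability FPE}. You spell out in more detail the transfer steps (Parseval, exchange of sums and integrals, regularity of $y\mapsto\phi_j^y$) that the paper compresses into the phrase ``series of equivalent transformations,'' which is a more careful rendering of the same argument rather than a different one.
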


\begin{proof}
As the equations are obtained via a series of equivalent transformations from the original Fokker-Planck equation \eqref{eq:fast-slow FPE} it suffices to prove the existence of solutions for the original equation.
This however follows immediately from assumption \ref{ass: solvability FPE} as we now can apply classical results for parabolic equations on an infinite strip \cite{watson1989parabolic}.
And for more details on the existence of solutions for the Fokker-Planck we refer to \cite{bogachev2022fokker} and the references therein.
\end{proof}

Thus, the system of equations \eqref{eq:coefficient slow}-\eqref{eq: coefficient ic} is well-behaved.
Moreover, we can study its dynamics and geometry in order to reduce the dimension of the system by using the abstract Banach space theory presented in the next section of this paper.

\begin{remark}
    We want to emphasize that up to this point the stochastic reduction method would also work in higher dimensional spatial domains as well as with different domains and boundary conditions.
\end{remark}

\begin{remark}\label{rmk: different regularity of solutions}
In applications it is not always possible to satisfy the assumptions on the regularity of the functions $f,g$, which is needed for the geometric study of the fast-slow system in the next section.
Therefore, we want to recall some of the results for the existence of a solution to the Fokker-Planck equation for less regular coefficients $f,g$.
    \begin{itemize}
        \item The first result is due to \cite{jordan1998variational}, where the authors assume that there exists a potential $\Psi\in C^\infty(\mathbb{R}^2)$ such that $(f,g)^T=\nabla \Psi$ satisfying $\Psi(x,y)\geq 0$ and $|\nabla\Psi(x,y)|\leq C(\Psi(x,y)+1)$ for all $(x,y)\in \mathbb{R}^2$.
        Then for any initial data $\rho_0\in L^1(\mathbb{R}^2)$ there exists a probability density $\rho\in L^1((0,T)\times \mathbb{R}^2)$ solving the fast-slow Fokker-Planck equation \eqref{eq:fast-slow FPE} such that $\int_{\mathbb{R}^2} |(x,y)|^2\rho(t,x,y)|\textnormal{d}x\textnormal{d}y <\infty$ and $\int_{\mathbb{R}^2}\Psi(x,y)\rho(t,x,y)|\textnormal{d}x\textnormal{d}y <\infty$ for all $t\in (0,T)$.
        \item The next result \cite{le2004renormalized,bris2008existence} assumes that $f,g\in W^{1,1}_{\textnormal{loc}}(\mathbb{R}^2)$, $\textnormal{div}f,\textnormal{div}g\in L^\infty( \mathbb{R}^2)$ and $\frac{f,g}{1+|(x,y)|}\in L^\infty(\mathbb{R}^2)$. 
        Then for all $\rho_0\in L^2(\mathbb{R}^2)\cap L^\infty(\mathbb{R}^2)$ there exists a solution $\rho$ to the fast-slow Fokker-Planck equation satisfying $\rho\in L^\infty([0,T];L^2(\mathbb{R}^2)\cap L^\infty(\mathbb{R}^2))$ and $\nabla \rho \in L^2([0,T];L^2(\mathbb{R}^2))$.
        \item The last result we consider is due to \cite{bogachev2008parabolic,manita2016cauchy,bogachev2022fokker}, where the authors assume that $f,g\in L^\infty([0,T]\times B)$ for any ball $B\subset \mathbb{R}^2$. Then there exists a solution to the Fokker-Planck equation $\rho\in L^{3/2}_{\textnormal{loc}}([0,T]\times\mathbb{R}^2)$.
    \end{itemize}
\end{remark}

% \begin{remark}
%     We note that the further reductions depend on the base functions $\{\phi_j\}$. In the case that the the eigenfunctions only have a finite support and only finitely many of these supports overlap for all $x,y$ then the infinite sum reduces to a finite one.
%     In all other cases we observe that the sequence $a_j(t,y)\phi_j(x,y)\omega(x,y)$ is bounded and converges to zero.
%     Hence, using the properties of the orthogonal eigenfunctions and weight function we have that $a_j(t,y)$ is a null sequence for all $t,y$.  
%     Therefore, there exists a $j_0\in \mathcal{J}$ such that for all $j>j_0$ it holds that $\|a_j(t,y)\|\leq \varepsilon$ and $\|\sum_{j>j_0}a_j(t,y)\phi_j(x,y)\omega(x,y) \|\leq \varepsilon$.
% \end{remark}

% \begin{remark}
%     A different approach to reduce the PDE system is to consider only a finite number of coefficients in the eigenfunction decomposition.
%     Using the properties of the convergence of the eigenfunction approximation this then leads to a similar system as obtained by the previous remark.
% \end{remark}

\section{Fast Diffusion Systems}\label{sec:abstract theory}

In this section we prove the existence of a slow manifold for a fast diffusion system that has a similar structure to the Fokker-Planck system of the previous section after using the stochastic reduction method. 
The general form of the equations we consider is
\begin{align}\label{eq: fast diffusion}
    \begin{split}
        \partial_t u^\varepsilon &= \frac{1}{\varepsilon} A u^\varepsilon + \mathcal{F}(u^\varepsilon, v^\varepsilon),\\
        \partial_t v^\varepsilon&= B v^\varepsilon + \mathcal{G}(u^\varepsilon, v^\varepsilon),
    \end{split}
\end{align}
where $0< \varepsilon\ll 1$ is a small parameter that encompasses the separation of time scales, and where $A,B$ are linear and possibly unbounded operators on a Banach space $X$, respectively $Y$.
The initial data satisfies
\begin{align}
    u^\varepsilon(0)&=  u^\varepsilon_0 \in X_1,\qquad  v^\varepsilon(0)=  v^\varepsilon_0\in Y_1,
\end{align}
where  $X_1= D(A)$ denotes the domain of the operator $A$ and $Y_1= D(B)$ the domain of $B$.
The formal limit of the system as $\varepsilon\to 0$ is
\begin{align}\label{eq: fast diffuion limit}
   \begin{split}
       0&= A u^0,\\
       \partial_t v^0&= B v^0+ \mathcal{G}(u^0,v^0),\\
       v^0(0)&=v_0,
   \end{split}
\end{align}
provided the real part of the spectrum of the operator $A$ is entirely negative and where the critical manifold $S_0$ is given as the null space of the operator $A$, i.e.
\begin{align}\label{eq: critical manifold}
    S_0:=\{ u\in X: ~Au =0\}.
\end{align}

\subsection{Assumptions}\label{Sec:assumptions}
For the operators $A$ and $B$ we make the following assumptions
\begin{itemize}
    \item $A, B$ are closed linear operators, where $A:X\supset D(A)\to X$ generates the exponentially stable $C_0$-semigroup $(\textnormal{e}^{tA})_{t\geq 0}\subset \mathcal{B}(X)$ and $B:Y\supset D(B)\to Y$ generates the $C_0$-semigroup $(\textnormal{e}^{tB})_{t\geq 0}\subset \mathcal{B}(Y)$, respectively, where $\mathcal{B}(\cdot)$ denotes the space of bounded linear operators.
    \item The interpolation-extrapolation scales generated by $(X,A)$ and $(Y,B)$ are given by $(X_\alpha)_{\alpha\in [-1,\infty)}$ and $(Y_\alpha)_{\alpha\in [-1,\infty)}$ (for details regarding the definitions and basic properties of interpolation-extrapolation scales see \cite{amann1995linear}).
    \item Let $\gamma\in (0,1]$ if $(\textnormal{e}^{tA})_{t\geq 0}$ is analytic and set $\gamma=1$ otherwise.
    Similarly, we introduce $\delta \in (0,1]$ if $(\textnormal{e}^{tB})_{t\geq 0}$ is analytic and set $\delta=1$ otherwise.
     \item There are constants $C_A,C_B, M_A, M_B>0$, $\omega_A <0 $ and $\omega_B\in \mathbb{R}$ such that
    \begin{align}\label{eq: semigroup estimate}
    \begin{split}
        \|\textnormal{e}^{tA}\|_{\mathcal{B}(X_{1})}&\leq M_A \textnormal{e}^{\omega_A t},\qquad \|\textnormal{e}^{tA}\|_{\mathcal{B}(X_\gamma,X_{1})}\leq C_At^{\gamma-1}\textnormal{e}^{\omega_A t},\\
        \|\textnormal{e}^{tB}\|_{\mathcal{B}(Y_{1})}&\leq M_B \textnormal{e}^{\omega_B t},\qquad \|\textnormal{e}^{tB}\|_{\mathcal{B}(Y_\delta,Y_{1})}\leq C_B t^{\delta-1} \textnormal{e}^{\omega_B t}
        \end{split}
    \end{align}
    hold for all $t\in (0,\infty)$.
\end{itemize}
For the nonlinear functions $\mathcal{F}$ and $\mathcal{G}$ we have the following assumptions
\begin{itemize}
    \item The nonlinearities $\mathcal{F}:X_1\times Y_{1}\to X_{\gamma}$ and $\mathcal{G}:X_{1}\times Y_{1}\to Y_\delta$ are Fr\'echet differentiable. Moreover, there are constants $L_\mathcal{F}, L_\mathcal{G}>0$ such that
    \begin{align}
       \|\textnormal{D} \mathcal{F}(x,y)\|_{\mathcal{B}(X_{1}\times Y_{1},X_\gamma)}&\leq L_\mathcal{F},\qquad  \|\textnormal{D} \mathcal{G}(x,y)\|_{\mathcal{B}(X_{1}\times Y_{1},Y_\delta)}\leq L_\mathcal{G}, 
    \end{align}
    for all $(x,y)\in X_1\times Y_1$.
    This implies that the following inequalities hold
        \begin{align*}
        \|\mathcal{F}(x_1,y_1)-\mathcal{F}(x_2,y_2)\|_{X_\gamma}&\leq L_\mathcal{F}( \|x_1-x_2\|_{X_{1}}+\|y_1-y_2\|_{Y_{1}}),\\
        \|\mathcal{G}(x_1,y_1)-\mathcal{G}(x_2,y_2)\|_{Y_\delta}&\leq L_\mathcal{G}( \|x_1-x_2\|_{X_{1}}+\|y_1-y_2\|_{Y_{1}})
    \end{align*}
    for all $x_1,x_2\in X_{1}$, $y_1,y_2\in Y_{1}$.  
    \item For convenience, we assume that $\mathcal{F}(0,0)=0$ and $\mathcal{G}(0,0)=0$.
\end{itemize}

These assumptions guarantee the existence of strong solutions $(u^\varepsilon,v^\varepsilon)$ and $(u^0,v^0)$ in the space $C^1((0,T_*);X\times Y)\cap C([0,T_*);X_1\times Y_1)$, where $T_*$ denotes the maximal time of existence of the solutions. 
Moreover, we can write the solutions in the mild solution form using Duhamel's formula.
For more details we refer to \cite{lunardi2012analytic}.

Next, we require some technical assumptions on the Banach space $Y$ such that a splitting into fast and slow components is possible. The splitting is necessary since the operator $B$ is unbounded \cite{hummel2022slow}. Let $\zeta>0$ be a small parameter. Then, we introduce a splitting of the space $Y$ to split off effectively fast parts 
\begin{align*}
    Y= Y^\zeta_F\oplus Y^\zeta_S,
\end{align*}
where the splitting satisfies
\begin{itemize}
    \item The spaces $Y^\zeta_F$ and $Y^\zeta_S$ are closed in $Y$ and the projections $\pr_{Y^\zeta_F}$ and $\pr_{Y^\zeta_S}$ commute with $B$ on $Y_1$.
    \item The spaces $Y^\zeta_F\cap Y_1$ and $Y^\zeta_S\cap Y_1$ are closed subspaces of $Y_1$ and are endowed with the norm $\|\cdot\|_{Y_1}$.
    \item The realization of $B$ in $Y^\zeta_F$, i.e. 
    \begin{align*}
        B_{Y^\zeta_F}:D(Y^\zeta_F)&\subset Y^\zeta_F\to Y^\zeta_F,\quad v\mapsto Bv
        \intertext{with}
        D(Y^\zeta_F)&:= \{v_0\in Y^\zeta_F\,:\, Bv_0\in Y^\zeta_F\}
    \end{align*}
    has $0$ in its resolvent set.
    \item The realization of $B$ in $Y^\zeta_S$ generates a $C_0$-group $(\textnormal{e}^{tB_{Y^\zeta_S}})_{t\in \mathbb{R}}\subset \mathcal{B}((Y^\zeta_S,\|\cdot\|_{Y}) )$ which for $t\geq 0$ satisfies $\textnormal{e}^{tB_{Y^\zeta_S}}=\textnormal{e}^{tB}$ on $Y^\zeta_S$.
    \item The fast subspace $Y^\zeta_F$ contains the parts of $Y_1$ that decay under the semigroup $(\textnormal{e}^{tB})_{t\geq 0}$ almost as fast as functions in the fast variable space $X_1$ under the semi-group $(\textnormal{e}^{\zeta^{-1}t A})_{t\geq 0}$ generated by the operator $A$.
    The space $Y^\zeta_S$ on the other hand contains the parts of $Y_1$ that do not decay or which decay only slowly under the semigroup $(\textnormal{e}^{tB})_{t\geq 0}$ compared to $X_1$ under $(\textnormal{e}^{\zeta^{-1}t A})_{t\geq 0}$. 
    Hence, there are constants $C_B,\,M_B>0$ such that for $\zeta>0$ small enough there are constants $N_F^\zeta,N_S^\zeta$ satisfying   
    $$0\leq N_F^\zeta< N_S^\zeta\leq |\zeta^{-1}\omega_A|$$
    such that for all $t\geq 0$ and $y_F\in Y^\zeta_F,\, y_S\in Y^\zeta_S$ we have 
    \begin{align*}
        \|\textnormal{e}^{tB}y_F\|_{Y_1}&\leq C_B t^{\delta -1} \textnormal{e}^{(N_F^\zeta +\zeta^{-1}\omega_A)t}\|y_F\|_{Y_\delta},\\
        \|\textnormal{e}^{-tB}y_S\|_{Y_1}&\leq M_B \textnormal{e}^{-(N_S^\zeta +\zeta^{-1}\omega_A)t}\|y_S\|_{Y_1}.
    \end{align*}
    \item A necessary condition on the size of the parameter $\zeta>0$ is the following relation
         $$(1-\varepsilon \zeta^{-1})\omega_A-\frac{\varepsilon}{2}(N_S^\zeta+N_F^\zeta)<0.$$
        This can be achieved by assuming $\varepsilon\zeta^{-1}\leq 1$.
    \item Lastly, the following spectral gap condition holds
    \begin{align}\label{spectral gap condition}
         L_{\textnormal{spec}}:=  \frac{\varepsilon 2^\gamma L_\mathcal{F} C_A \Gamma(\gamma)}{(2(\varepsilon\zeta^{-1} -1)\omega_A + \varepsilon (N_S^\zeta+N_F^\zeta) )^\gamma} + \frac{2^\delta L_\mathcal{G}C_B \Gamma(\delta)}{(N_S^\zeta-N_F^\zeta)^\delta}+\frac{2L_\mathcal{G} M_B \Gamma(\delta)}{N_S^\zeta-N_F^\zeta}< 1.
    \end{align}
\end{itemize}
\begin{remark}
    Note, that, in contrast to the previous results \cite{hummel2022slow,kuehn2023fast}, we can in particular choose $\varepsilon=\zeta$. This is due to the special structure of the fast diffusion problem~\eqref{eq: fast diffusion} that we consider in this work. 
\end{remark}

\subsection{Convergence of Solutions}

The first step in the analysis of the fast-diffusion system is to show the convergence of solutions of system \eqref{eq: fast diffusion} to solutions of the limit system \eqref{eq: fast diffuion limit} as $\varepsilon\to 0$.
\begin{lemma}\label{lemma: convergnce of sol}
    Let the assumptions of Section \ref{Sec:assumptions} hold and assume in addition that the initial data satisfies $v^\varepsilon_0=v_0$.
    Then for all $t\in [0,T_*)$ the following convergence result holds
    \begin{align*}
      \| u^\varepsilon(t)\|_{X_1}+  \|v^\varepsilon(t)-v^0(t)\|_{Y_1}&\leq    C  e^{\varepsilon^{-1}\omega_A t} \|u^\varepsilon_0\|_{X_1}+ C \varepsilon^\delta \big( \|u^\varepsilon_0\|_{X_1}+\|v_0\|_{Y_1}\big),
    \end{align*}
    where $C>0$ is a constant depending on the Lipschitz constants $L_\mathcal{F},L_\mathcal{G}$, the constants arising in the semigroup estimates \eqref{eq: semigroup estimate} and the time $t$ but is independent of $\varepsilon$.
\end{lemma}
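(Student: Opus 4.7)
The plan is to combine Duhamel's formula for each of the two equations with the semigroup estimates \eqref{eq: semigroup estimate} and a singular Gronwall argument. First, I would observe that since $(\textnormal{e}^{tA})_{t\ge 0}$ is exponentially stable with $\omega_A<0$, the operator $A$ is invertible and the condition $A u^0=0$ in \eqref{eq: fast diffuion limit} forces $u^0\equiv 0$; consequently $v^0$ solves $\partial_t v^0 = B v^0 + \mathcal{G}(0,v^0)$ with initial datum $v_0$. Using $v^\varepsilon_0=v_0$, the mild formulations are
\begin{align*}
u^\varepsilon(t) &= \textnormal{e}^{t\varepsilon^{-1}A}u^\varepsilon_0 + \int_0^t \textnormal{e}^{(t-s)\varepsilon^{-1}A}\mathcal{F}(u^\varepsilon(s),v^\varepsilon(s))\,\textnormal{d}s, \\
v^\varepsilon(t)-v^0(t) &= \int_0^t \textnormal{e}^{(t-s)B}\bigl[\mathcal{G}(u^\varepsilon(s),v^\varepsilon(s))-\mathcal{G}(0,v^0(s))\bigr]\,\textnormal{d}s,
\end{align*}
where we used $\mathcal{F}(0,0)=0$ and $\mathcal{G}(0,0)=0$ to set up the Lipschitz comparison.

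For the fast variable I would apply $\|\textnormal{e}^{t\varepsilon^{-1}A}\|_{\mathcal{B}(X_\gamma,X_1)} \le C_A \varepsilon^{1-\gamma}t^{\gamma-1}\textnormal{e}^{\varepsilon^{-1}\omega_A t}$ together with the Lipschitz bound $\|\mathcal{F}(u,v)\|_{X_\gamma}\le L_\mathcal{F}(\|u\|_{X_1}+\|v\|_{Y_1})$ to obtain
\begin{equation*}
\|u^\varepsilon(t)\|_{X_1} \le M_A \textnormal{e}^{\varepsilon^{-1}\omega_A t}\|u^\varepsilon_0\|_{X_1} + L_\mathcal{F} C_A \varepsilon^{1-\gamma}\!\int_0^t (t-s)^{\gamma-1}\textnormal{e}^{\varepsilon^{-1}\omega_A(t-s)}\bigl(\|u^\varepsilon(s)\|_{X_1}+\|v^\varepsilon(s)\|_{Y_1}\bigr)\textnormal{d}s.
\end{equation*}
The substitution $r=(t-s)/\varepsilon$ evaluates the convolution kernel as at most $\varepsilon^\gamma\Gamma(\gamma)/|\omega_A|^\gamma$, so the prefactor $\varepsilon^{1-\gamma}$ combines to give a forcing of order $\varepsilon$. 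A direct Gronwall step, using that $\|v^\varepsilon(s)\|_{Y_1}\le \|v^\varepsilon(s)-v^0(s)\|_{Y_1}+\|v^0(s)\|_{Y_1}$ and that $v^0$ is bounded on $[0,t]$, yields an intermediate estimate of the form $\|u^\varepsilon(t)\|_{X_1}\le C\textnormal{e}^{\varepsilon^{-1}\omega_A t}\|u^\varepsilon_0\|_{X_1}+C\varepsilon(\|u^\varepsilon_0\|_{X_1}+\|v_0\|_{Y_1}+\|v^\varepsilon-v^0\|_{L^\infty_t Y_1})$.

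For the slow difference I would use $\|\textnormal{e}^{tB}\|_{\mathcal{B}(Y_\delta,Y_1)}\le C_B t^{\delta-1}\textnormal{e}^{\omega_B t}$ and the Lipschitz bound on $\mathcal{G}$ (after splitting $\mathcal{G}(u^\varepsilon,v^\varepsilon)-\mathcal{G}(0,v^0)$ as $[\mathcal{G}(u^\varepsilon,v^\varepsilon)-\mathcal{G}(0,v^\varepsilon)]+[\mathcal{G}(0,v^\varepsilon)-\mathcal{G}(0,v^0)]$) to get
\begin{equation*}
\|v^\varepsilon(t)-v^0(t)\|_{Y_1} \le L_\mathcal{G} C_B \!\int_0^t (t-s)^{\delta-1}\textnormal{e}^{\omega_B(t-s)}\bigl(\|u^\varepsilon(s)\|_{X_1}+\|v^\varepsilon(s)-v^0(s)\|_{Y_1}\bigr)\textnormal{d}s.
\end{equation*}
Substituting the preceding bound for $\|u^\varepsilon(s)\|_{X_1}$ generates the kernel $\int_0^t(t-s)^{\delta-1}\textnormal{e}^{\omega_B(t-s)}\textnormal{e}^{\varepsilon^{-1}\omega_A s}\textnormal{d}s$; splitting into $[0,t/2]\cup[t/2,t]$ one bounds this by $C\varepsilon\le C\varepsilon^\delta$ (using $\delta\le 1$) plus an exponentially small remainder. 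The remaining linear Volterra inequality in $\|v^\varepsilon-v^0\|_{Y_1}$ with singular kernel $(t-s)^{\delta-1}\textnormal{e}^{\omega_B(t-s)}$ is closed by a singular Gronwall lemma of Henry type, producing the claimed bound after adding the two estimates.

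The main technical hurdle is the interplay between the singular time-kernel $(t-s)^{\delta-1}$ coming from the only-Hölder semigroup estimate and the rapidly oscillating factor $\textnormal{e}^{\varepsilon^{-1}\omega_A s}$ coming from the initial layer of $u^\varepsilon$: the convolution must be handled via a careful rescaling and split argument to extract an $\mathcal{O}(\varepsilon^\delta)$ forcing, and the resulting coupled integral inequalities require a singular Gronwall inequality rather than the classical one. Once these convolution estimates are established, closing the Gronwall loop and absorbing the self-coupled terms (using that $\varepsilon$ is small) is routine.
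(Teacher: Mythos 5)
Your proposal is correct and follows essentially the same route as the paper: mild (Duhamel) formulations for both components, the observation that $\omega_A<0$ forces $u^0\equiv 0$, the rescaled semigroup estimate to extract an $\mathcal{O}(\varepsilon)$ forcing in the fast equation, insertion of that bound into the slow equation, and closure via a singular Gronwall argument. The only difference is cosmetic: you carry out the $\varepsilon$-rescaling of the convolution kernel and the $[0,t/2]\cup[t/2,t]$ split explicitly, where the paper delegates these steps to a cited Gronwall-type lemma.
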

\begin{proof}
   We note that the assumptions on the operator $A$, in particular the requirement that the semigroup bound $\omega_A<0$, imply that $u^0\equiv 0$.
   Thus, we obtain
   \begin{align*}
       \| u^\varepsilon(t)-u^0\|_{X_1}&=  \bigg\|e^{\varepsilon^{-1}At}u_0+ \int_0^t e^{\varepsilon^{-1}A(t-s)} \mathcal{F}(u^\varepsilon(s),v^\varepsilon(s))\, \textnormal{d}s\bigg\|_{X_1}\\
       &\leq  C_A  e^{\varepsilon^{-1}\omega_A t} \|u^\varepsilon_0\|_{X_1}+ C_A L_\mathcal{F} \int_0^t \frac{e^{\varepsilon^{-1}\omega_A(t-s)}}{(t-s)^{1-\gamma}}\big(\|u^\varepsilon(s)\|_{X_1}+ \|v^\varepsilon(s)-v^0(s)\|_{Y_1}+\|v^0(s)\|_{Y_1}\big)\, \textnormal{d}s,
   \end{align*}
  where we used the semigroup estimate \eqref{eq: semigroup estimate} in the last inequality.\\
    Using the estimate $\|v^0(t)\|_{Y_1} \leq C \|v_0\|_{Y_1}$ and applying a special version of Gronwall's inequality (see Lemma 2.8 in \cite{hummel2022slow}) yields
   \begin{align*}
        \| u^\varepsilon(t)\|_{X_1}&\leq  C  e^{\varepsilon^{-1}\omega_A t} \|u^\varepsilon_0\|_{X_1}+\varepsilon C \|v_0\|_{Y_1} +\varepsilon C\sup_{0\leq s\leq t} \big(\|v^\varepsilon(s)-v^0(s)\|_{Y_1}\big).
   \end{align*}
   Similarly we estimate
\begin{align*}
    \|v^\varepsilon(t)-v^0(t)\|_{Y_1}&= \bigg\|\int_0^te^{B(t-s)}\big( \mathcal{G}(u^\varepsilon(s),v^0(s))-g(0,v^0(s))\big)\, \textnormal{d}s\bigg\|_{Y_1}\\
    &\leq C_B L_\mathcal{G}\int_0^t \frac{e^{\omega_B(t-s)}}{(t-s)^{1-\delta}}\big(\|u^\varepsilon(s)\|_{X_1}+ \|v^\varepsilon(s)-v^0(s)\|_{Y_1}\big)\, \textnormal{d}s.
\end{align*}
Inserting the previous estimate yields
\begin{align*}
    \|v^\varepsilon(t)-v^0(t)\|_{Y_1}&\leq C_B L_\mathcal{G}\int_0^t \frac{e^{\omega_B(t-s)}}{(t-s)^{1-\delta}}\big(  C  e^{\varepsilon^{-1}\omega_A s} \|u^\varepsilon_0\|_{X_1}+\varepsilon C \|v_0\|_{Y_1} \big)\, \textnormal{d}s\\
    &\quad + C_B L_\mathcal{G} \int_0^t \frac{e^{\omega_B(t-s)}}{(t-s)^{1-\delta}}\big(\varepsilon C\sup_{0\leq r\leq s} \|v^\varepsilon(r)-v^0(r)\|_{Y_1}+ \|v^\varepsilon(s)-v^0(s)\|_{Y_1}\big)\, \textnormal{d}s\\
    &\leq C \varepsilon^\delta \big( \|u^\varepsilon_0\|_{X_1}+\|v_0\|_{Y_1}\big) + C  \int_0^t \frac{e^{\omega_B(t-s)}}{(t-s)^{1-\delta}} \sup_{0\leq r\leq s} \|v^\varepsilon(r)-v^0(r)\|_{Y_1}\, \textnormal{d}s.
\end{align*}
   Noting that the right hand side of the inequality is increasing with time we can take the supremum with respect to the time $t$ on both sides.
   Then, applying Gronwall's inequality yields
   \begin{align*}
       \sup_{0\leq s\leq t} \|v^\varepsilon(s)-v^0(s)\|_{Y_1}&\leq  C \varepsilon^\delta \big( \|u^\varepsilon_0\|_{X_1}+\|v_0\|_{Y_1}\big)
   \end{align*}
    and thus proves the assertion.    
\end{proof}  

\subsection{Fenichel-Type Theorem}
To generalize Fenichel's Theorem for the existence of a slow manifold in this fast diffusion setting we first introduce a system with a splitting of the formal slow variable, i.e., the $v$-component, which gives
\begin{align}\label{eq:fast diffusion system splitting}
    \begin{split}
        \partial_t u^\varepsilon &= \frac{1}{\varepsilon}A u^\varepsilon + \mathcal{F}(u^\varepsilon,v_F^\varepsilon, v_S^\varepsilon),\\
        \partial_t v_F^\varepsilon &= B v_F^\varepsilon +\pr_{Y_F^\zeta} \mathcal{G}(u^\varepsilon,v_F^\varepsilon, v_S^\varepsilon),\\
        \partial_t v_S^\varepsilon &= B v_S^\varepsilon +\pr_{Y_S^\zeta} \mathcal{G}(u^\varepsilon,v_F^\varepsilon, v_S^\varepsilon),\\
        u^\varepsilon(0)&= u^\varepsilon_0,\quad  v^\varepsilon_F(0)= \pr_{Y_F^\zeta} v_0,\quad v^\varepsilon_S(0)= \pr_{Y_S^\zeta} v_0.
    \end{split}
\end{align}

First, we show the existence of an invariant manifold using the Lyapunov-Perron operator method.

\begin{lemma}\label{lemma: lyapunov perron}
  Under the assumptions of Section \ref{Sec:assumptions} the Lyapunov-Perron operator corresponding to system \eqref{eq:fast diffusion system splitting} has a fixed point.
\end{lemma}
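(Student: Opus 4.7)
The plan is to apply the Lyapunov--Perron method to the system \eqref{eq:fast diffusion system splitting}: for each initial datum $v_{S,0}\in Y_S^\zeta\cap Y_1$, construct a bounded trajectory on $(-\infty,0]$ whose fast components $(u^\varepsilon,v_F^\varepsilon)$ at $t=0$ parametrize the invariant manifold as a graph over $v_S$. First, I choose a weight $\eta$ strictly between $N_F^\zeta+\zeta^{-1}\omega_A$ and $N_S^\zeta+\zeta^{-1}\omega_A$ and introduce the weighted Banach space
\begin{align*}
Z_\eta := \Bigl\{\xi=(u,v_F,v_S)\in C\bigl((-\infty,0];X_1\times(Y_F^\zeta\cap Y_1)\times(Y_S^\zeta\cap Y_1)\bigr)\,:\,\|\xi\|_{Z_\eta}<\infty\Bigr\},
\end{align*}
endowed with
\begin{align*}
\|\xi\|_{Z_\eta}:=\sup_{t\leq 0} e^{-\eta t}\bigl(\|u(t)\|_{X_1}+\|v_F(t)\|_{Y_1}+\|v_S(t)\|_{Y_1}\bigr).
\end{align*}

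Next, since both $(e^{tA/\varepsilon})_{t\geq 0}$ and $(e^{tB}|_{Y_F^\zeta})_{t\geq 0}$ are exponentially stable under the assumptions of Section \ref{Sec:assumptions}, the fast components admit mild representations as integrals from $-\infty$, while the group property of $B_{Y_S^\zeta}$ lets the slow component be written as the usual forward mild solution starting at $t=0$. This motivates defining the Lyapunov--Perron operator $\mathcal{T}\xi=(\mathcal{T}_1\xi,\mathcal{T}_2\xi,\mathcal{T}_3\xi)$ by
\begin{align*}
(\mathcal{T}_1\xi)(t)&:=\int_{-\infty}^{t} e^{(t-s)A/\varepsilon}\mathcal{F}(\xi(s))\,\textnormal{d}s,\\
(\mathcal{T}_2\xi)(t)&:=\int_{-\infty}^{t} e^{(t-s)B}\pr_{Y_F^\zeta}\mathcal{G}(\xi(s))\,\textnormal{d}s,\\
(\mathcal{T}_3\xi)(t)&:=e^{tB_{Y_S^\zeta}}v_{S,0}-\int_{t}^{0} e^{(t-s)B_{Y_S^\zeta}}\pr_{Y_S^\zeta}\mathcal{G}(\xi(s))\,\textnormal{d}s.
\end{align*}

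To show that $\mathcal{T}:Z_\eta\to Z_\eta$ is well defined and contractive, I would estimate each component using the semigroup bounds \eqref{eq: semigroup estimate} together with the refined decay rates provided by the splitting $Y=Y_F^\zeta\oplus Y_S^\zeta$, the Lipschitz bounds on $\mathcal{F},\mathcal{G}$, and the normalization $\mathcal{F}(0)=\mathcal{G}(0)=0$. For two inputs $\xi^1,\xi^2\in Z_\eta$, after substituting $\tau=t-s$ in each convolution and applying the identity $\int_0^\infty\tau^{\alpha-1}e^{-\beta\tau}\,\textnormal{d}\tau=\Gamma(\alpha)\beta^{-\alpha}$, the three contributions combine to
\begin{align*}
\|\mathcal{T}\xi^1-\mathcal{T}\xi^2\|_{Z_\eta}\leq L_{\textnormal{spec}}\,\|\xi^1-\xi^2\|_{Z_\eta},
\end{align*}
with $L_{\textnormal{spec}}$ precisely the constant appearing in \eqref{spectral gap condition}; indeed, each of the three summands there mirrors one of the three integrals of $\mathcal{T}$. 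Since $L_{\textnormal{spec}}<1$ by hypothesis, Banach's fixed point theorem yields the desired fixed point of $\mathcal{T}$.

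The principal obstacle is the careful calibration of the weight $\eta$ together with the three exponential rates, ensuring simultaneously that (i) the backward integrals $\int_{-\infty}^t$ converge for the fast components, (ii) the forward integral $\int_t^0$ remains uniformly bounded for the slow component, and (iii) the resulting contraction constant is exactly $L_{\textnormal{spec}}$ rather than a larger quantity. This requires exploiting the compatibility inequality $(1-\varepsilon\zeta^{-1})\omega_A-\tfrac{\varepsilon}{2}(N_S^\zeta+N_F^\zeta)<0$ to guarantee a strictly positive integration rate after the weight shift. A secondary technical point is that the analyticity-induced singularities $(t-s)^{\gamma-1}$ and $(t-s)^{\delta-1}$ must be handled via the Gamma identity rather than by crude bounds, since the Gamma factors are precisely what makes the spectral gap condition \eqref{spectral gap condition} attainable in the relevant parameter regime; being able to take $\varepsilon=\zeta$ here (unlike in \cite{hummel2022slow,kuehn2023fast}) is a consequence of the linear fast term in \eqref{eq: fast diffusion}, which one must verify propagates correctly through these estimates.
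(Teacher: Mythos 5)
Your proposal is correct and takes essentially the same route as the paper: a weighted space of exponentially decaying trajectories on $(-\infty,0]$ (the paper fixes $\eta=\zeta^{-1}\omega_A+\tfrac{N_S^\zeta+N_F^\zeta}{2}$, the midpoint of your admissible interval), a contraction estimate whose three contributions reproduce the three summands of $L_{\textnormal{spec}}$ in \eqref{spectral gap condition}, and Banach's fixed point theorem. The paper in fact only writes out the estimate for the fast $u$-component (citing \cite{hummel2022slow,kuehn2023fast} for the $v_F$ and $v_S$ parts), so your sketch is, if anything, more complete.
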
 
\begin{proof}
The main difference to the previous results on slow manifolds lies in the fast component (cf. \cite{hummel2022slow, kuehn2023fast}).
Therefore, we only consider the Lyapunov-Perron operator for this component and denote it by $\mathcal{L}_u$
\begin{align*}
    \mathcal{L}_u:C_\eta \to C_\eta,\quad     u^\varepsilon(t)\mapsto \bigg[ t\mapsto \int_{-\infty}^t e^{\varepsilon^{-1}A (t-s)} \mathcal{F}(u^\varepsilon,v_f^\varepsilon,v_S^\varepsilon)\, \textnormal{d}s \bigg].
\end{align*}
We aim to show the existence of a fixed point in the space 
\begin{align*}
C_\eta := C\big((-\infty, 0], \textnormal{e}^{\eta t};X1 \times (Y_F^\zeta \cap Y_1) \times ( Y_S^\zeta \cap Y_1)\big),    
\end{align*}
such that 
\begin{align*}
    \|(u,v_F,v_S)\|_{C_\eta}:= \sup_{t\leq 0}\textnormal{e}^{-\eta t}\big(\|u\|_{X_1}+\|v_F\|_{Y_1}+\|v_S\|_{Y_1}\big)<\infty,
    \end{align*}
where $\eta= \zeta^{-1}\omega_A +\frac{N_S^\zeta+N_F^\zeta}{2}<0$.
Then we estimate
\begin{align*}
    \|\mathcal{L}_u(u^\varepsilon,v_F^\varepsilon,v_S^\varepsilon)-\mathcal{L}_u(\tilde u^\varepsilon,\tilde v_F^\varepsilon,\tilde v_S^\varepsilon)\|_{C_\eta}&=\sup_{t\leq 0} e^{-\eta t}\bigg\|\int_{-\infty}^t e^{\varepsilon^{-1}A (t-s)} (\mathcal{F}(u^\varepsilon,v_f^\varepsilon,v_S^\varepsilon)-\mathcal{F}(\tilde u^\varepsilon,\tilde v_F^\varepsilon,\tilde v_S^\varepsilon))\, \textnormal{d}s\bigg\|\\
    &\leq C_A L_\mathcal{F} \varepsilon\int_{-\infty}^t \frac{(e^{\varepsilon^{-1}\omega_A -\eta)(t-s)}}{\varepsilon^\gamma(t-s)^{1-\gamma}}\, \textnormal{d}s \|u^\varepsilon-\tilde u^\varepsilon,v_F^\varepsilon-\tilde v_F^\varepsilon,v_S^\varepsilon-\tilde v_S^\varepsilon\|_{C_\eta}\\
&\leq \frac{\varepsilon C_AL_\mathcal{F} \Gamma(\gamma)}{|\omega_A-\varepsilon\eta|^\gamma} \|u^\varepsilon-\tilde u^\varepsilon,v_F^\varepsilon-\tilde v_F^\varepsilon,v_S^\varepsilon-\tilde v_S^\varepsilon\|_{C_\eta}\\
&\leq  \frac{\varepsilon 2^\gamma C_AL_\mathcal{F} \Gamma(\gamma)}{|2\omega_A(1-\varepsilon\zeta^{-1})-\varepsilon(N_S^\zeta+N_F^\zeta)|^\gamma} \|u^\varepsilon-\tilde u^\varepsilon,v_F^\varepsilon-\tilde v_F^\varepsilon,v_S^\varepsilon-\tilde v_S^\varepsilon\|_{C_\eta}.
\end{align*}
For $0<\varepsilon\leq\zeta\ll 1$ the above expression can be controlled by
\begin{align*}
     \|\mathcal{L}_u(u^\varepsilon,v_F^\varepsilon,v_S^\varepsilon)-\mathcal{L}_u(\tilde u^\varepsilon,\tilde v_F^\varepsilon,\tilde v_S^\varepsilon)\|_{C_\eta} \leq \varepsilon C \|u^\varepsilon-\tilde u^\varepsilon,v_F^\varepsilon-\tilde v_F^\varepsilon,v_S^\varepsilon-\tilde v_S^\varepsilon\|_{C_\eta}.
\end{align*}
We denote the fixed point of the Lyapunov-Perron operator evaluated at $t=0$ by $(h^{\varepsilon,\zeta}_{X_1}(v_S),h^{\varepsilon,\zeta}_{Y^\zeta_F}(v_S),v_S)$.
Then, the in-flowing invariant manifold $S_{\varepsilon,\zeta}$ is given by
\begin{align}
    S_{\varepsilon,\zeta}:=\{ \big(h^{\varepsilon,\zeta}_{X_1}(v_S),h^{\varepsilon,\zeta}_{Y^\zeta_F}(v_S),v_S\big):v_S\in Y_1\cap Y_S^\zeta\}.
\end{align}
\end{proof}
\begin{theorem}\label{thm: Fenichel abstract}
    Let the assumptions of Section \ref{Sec:assumptions} hold and let the parameters $\varepsilon,\zeta>0$ satisfy $\varepsilon\zeta^{-1}<1$.
    Then the invariant manifold $S_{\varepsilon,\zeta}$ is indeed the slow manifold of the system satisfying
    \begin{enumerate}
        \item[(i)] Regularity: The manifold $S_{\varepsilon,\zeta}$ is Lipschitz continuous and if the critical manifold $S_0$ is $C^k$-smooth, then, so is $S_{\varepsilon,\zeta}$.
        \item[(ii)] Distance: The slow manifold has a (Hausdorff semi-)distance of $\mathcal{O}(\varepsilon)$ to $S_0$ as $\varepsilon,\zeta\to 0$.
        \item[(iii)] Attraction: $S_{\varepsilon,\zeta}$ is a locally exponential attracting invariant manifold.
        \item[(iv)] Convergence of semi-flows: The semi-flow on the slow manifold $S_{\varepsilon,\zeta}$ converges to the semi-flow on the critical manifold $S_0$.
    \end{enumerate}
\end{theorem}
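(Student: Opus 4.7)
The plan is to verify properties (i)--(iv) by analyzing the fixed-point maps $h^{\varepsilon,\zeta}_{X_1}(v_S)$ and $h^{\varepsilon,\zeta}_{Y_F^\zeta}(v_S)$ obtained in Lemma~\ref{lemma: lyapunov perron}, exploiting the uniform contraction with constant $L_{\textnormal{spec}} < 1$ from the spectral-gap condition together with the convergence result of Lemma~\ref{lemma: convergnce of sol}. Each of the four claims translates into a quantitative or qualitative property of these fixed-point maps, and the contraction estimate (together with its parameter-dependent and linearized variants) does most of the heavy lifting.

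For (i), I would first establish Lipschitz regularity by invoking the uniform-contraction-with-parameter principle: since the Lyapunov--Perron operator contracts with rate $L_{\textnormal{spec}} < 1$ and depends Lipschitz continuously on the parameter $v_S \in Y_1 \cap Y_S^\zeta$, its fixed point is automatically Lipschitz in $v_S$ with a quantitative constant proportional to $(1-L_{\textnormal{spec}})^{-1}$. To upgrade to $C^k$-smoothness, I would differentiate the fixed-point equation formally in $v_S$, observe that the resulting linear variational integral equation is again a contraction in a suitable space of bounded linear operators, and iterate this fiber-contraction argument $k$ times in the spirit of Bates--Lu--Zeng. For (ii), I would exploit the facts that the critical manifold corresponds to $u=0$, $v_F = 0$ and that $\mathcal{F}(0,0) = 0$, $\mathcal{G}(0,0) = 0$. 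Applying the Lipschitz bounds, the semigroup estimates~\eqref{eq: semigroup estimate}, and the integral $\int_{-\infty}^0 e^{\varepsilon^{-1}\omega_A|s|}\,\textnormal{d}s = \varepsilon |\omega_A|^{-1}$ from the fast component of the Lyapunov--Perron operator, one obtains
\begin{align*}
    \|h^{\varepsilon,\zeta}_{X_1}(v_S)\|_{X_1} + \|h^{\varepsilon,\zeta}_{Y_F^\zeta}(v_S)\|_{Y_1} \leq L_{\textnormal{spec}}\bigl(\|h^{\varepsilon,\zeta}_{X_1}(v_S)\|_{X_1} + \|h^{\varepsilon,\zeta}_{Y_F^\zeta}(v_S)\|_{Y_1}\bigr) + C\varepsilon \|v_S\|_{Y_1},
\end{align*}
and rearranging yields the $\mathcal{O}(\varepsilon)$ Hausdorff semi-distance to $S_0$.

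For (iii), exponential attraction follows by a Gronwall-type argument applied to the fiber difference $\bigl(u^\varepsilon(t)-h^{\varepsilon,\zeta}_{X_1}(v^\varepsilon_S(t)),\, v^\varepsilon_F(t)-h^{\varepsilon,\zeta}_{Y_F^\zeta}(v^\varepsilon_S(t))\bigr)$ for an arbitrary trajectory of~\eqref{eq:fast diffusion system splitting} starting near $S_{\varepsilon,\zeta}$. The variation-of-constants formula combined with the gap condition and the exponent $\eta = \zeta^{-1}\omega_A + (N_S^\zeta + N_F^\zeta)/2 < 0$ produces a strictly negative decay rate, which together with the Lipschitz nature of $h^{\varepsilon,\zeta}_{X_1}$, $h^{\varepsilon,\zeta}_{Y_F^\zeta}$ gives local exponential attraction. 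For (iv), convergence of semi-flows is essentially a corollary of (ii) and Lemma~\ref{lemma: convergnce of sol}: restricted to $S_{\varepsilon,\zeta}$ the fast variables are $\mathcal{O}(\varepsilon)$-close to zero, so the reduced evolution for $v_S$ becomes an $\mathcal{O}(\varepsilon)$-perturbation of $\partial_t v^0 = Bv^0 + \mathcal{G}(0,v^0)$, and the same Gronwall estimate as in Lemma~\ref{lemma: convergnce of sol} transports the perturbation to the level of trajectories on any finite time interval.

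The main obstacle is the $C^k$-smoothness claim in (i). Tracking how differentiation with respect to $v_S$ interacts simultaneously with both interpolation--extrapolation scales $(X_\alpha)_{\alpha}$ and $(Y_\alpha)_{\alpha}$, and arranging the fiber-contraction function spaces so that each successive derivative still satisfies a spectral-gap inequality of the form~\eqref{spectral gap condition}, requires careful bookkeeping of the exponents $\gamma$ and $\delta$ and of the integrable singularities at $s=t$. By contrast, (ii)--(iv) are comparatively direct consequences of the very estimates used to construct $S_{\varepsilon,\zeta}$ in Lemma~\ref{lemma: lyapunov perron} and to prove Lemma~\ref{lemma: convergnce of sol}.
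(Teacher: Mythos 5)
Your proposal is correct and follows essentially the same route as the paper: the paper's own proof is a short sketch that defers points (i), (iii) and (iv) to the prior works on slow manifolds for fast--slow PDEs and only spells out the Lipschitz regularity of $h^{\varepsilon,\zeta}$ via the spectral-gap condition $L_{\textnormal{spec}}<1$ and the distance estimate $\|h^{\varepsilon,\zeta}_{X_1}(v_0)\|_{X_1}\leq \varepsilon C\|v_0\|_{Y_1}$, exactly the two ingredients at the core of your argument. Your write-up is in fact more detailed than the paper's (fiber contraction for $C^k$, Gronwall for attraction and semi-flow convergence), all consistent with the cited framework.
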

\begin{proof}
    The proof of the statements in the generalization of the Fenichel Theorem to the class of fast diffusion systems follow along the lines of the previous works \cite{hummel2022slow,kuehn2023fast}.
    Here, we focus on two aspects.
    
   The Lipschitz regularity of the slow manifold, i.e. the Lipschitz regularity of the fixed point mapping $h^{\varepsilon,\zeta}$ follows from the properties of the Lyapunov-Perron operator and the spectral gap condition \eqref{spectral gap condition}, where the condition $L_{\textnormal{spec}}<1$ is again key.
    
    Next, we compute the distance between the first component of the slow manifold $S_{\varepsilon,\zeta}$ and the critical manifold $S_0$.
    We estimate
    \begin{align*}
    \|h^{\varepsilon,\zeta}_{X_1}(v_0)-u_0\|_{X_1}&= \|h^{\varepsilon,\zeta}_{X_1}(v_0)\|_{X_1}\leq \varepsilon C \|v_0\|_{Y_1},
    \end{align*}
    which follows from the Lipschitz regularity of the slow manifold.
\end{proof}

%\newpage
\section{Slow Manifolds in the Fokker-Planck equation}\label{sec: slow manifold in FPE}

We continue with Step (S4) from Section \ref{sec: stochastic reduction} and recall the infinite system of coupled PDEs posed on the time and space domain $(0,T)\times (-R,R)$
\begin{align}\label{eq:infinite coefficient slow full}
\begin{split}
     \partial_t   a_0(t,y)&=C_0\frac{\sigma_2^2}{2}\partial_{y,y} a_0(t,y) -  \partial_y (G_{0}(y)  a_0(t,y)) -  \sum_{i \in \mathbb{N}\setminus \{0\}} \partial_y(G_{i}(y) a_{i}(t,y)), \end{split}\\ \label{eq:infinite coefficient fast full}
     \begin{split}   \partial_t a_j(t,y)  &= \frac{\sigma_2^2}{2}  \partial_{y,y} a_j(t,y) - \frac{\lambda_j}{\varepsilon}a_j(t,y)  - \partial_y \big( G_{0,j}(y) a_0(t,y)\big) - \sum_{i \in \mathbb{N}\setminus \{0\}}\partial_y \big( G_{i,j}(y) a_i(t,y)\big)\\
        &\quad + \tilde G_{0,j}(y)a_0(t,y)+ \sum_{i \in \mathbb{N}\setminus \{0\}}\tilde G_{i,j}(y) a_i(t,y),
    \end{split}
\end{align}
together with the following absorbing boundary and initial conditions
\begin{align}\label{eq: infinite coefficient bc}
    a_j(t,-R)=a_j(t,R)=0,\quad t\in (0,T) \textnormal{ and for all } j\in \mathbb{N},\\ \label{eq: infinite coefficient ic}
    a_j(0,y)=a_{j,0}(y), \quad y\in (-R,R) \textnormal{ and for all } j\in \mathbb{N},
\end{align}
where we used the notation
\begin{align}\label{eq:infinite definition G}
\begin{split}
    C_0= &\int_{\mathbb{R}} \phi^y_0(x)\,\textnormal{d} x,\quad G_k(y)= \int_{\mathbb{R}} g(x,y)  \phi_k^y(x)\,\textnormal{d} x,\\
    G_{k,j}(y)&= \int_{\mathbb{R}} g(x,y)  \phi_k^y(x)\phi_j^y(x)\,\textnormal{d} x  \quad \textnormal{ and }\quad  \tilde G_{k,j} (y)=  \int_{\mathbb{R}} g(x,y)  \phi_k^y(x) \partial_y \phi_j^y(x)\,\textnormal{d} x
\end{split}  
\end{align}
for all $j,k\in \mathbb{N}$. Instead of working with the full system, we work with a truncated one.
The reasons will become clear when we apply the framework for the existence of a slow manifold for a fast reaction system developed in the previous section.

Now, let $J\in \mathbb{N}$ be the level where we truncate the full system. That is we consider the following system of coupled PDEs
\begin{align}\label{eq:infinite coefficient slow truncated}
    \begin{split}
     \partial_t   a_0^J(t,y)&=C_0\frac{\sigma_2^2}{2}\partial_{y,y} a_0^J(t,y) -  \partial_y (G_{0}(y)  a_0^J(t,y)) -  \sum_{i =1}^J \partial_y(G_{i}(y) a_{i}^J(t,y)), \end{split}\\ \label{eq:infinite coefficient fast truncated}
     \begin{split}   \partial_t a_j^J(t,y)  &= \frac{\sigma_2^2}{2}  \partial_{y,y} a_j^J(t,y) - \frac{\lambda_j}{\varepsilon}a_j^J(t,y)  - \partial_y \big( G_{0,j}(y) a_0^J(t,y)\big) -  \sum_{i =1}^J\partial_y \big( G_{i,j}(y) a_i^J(t,y)\big)\\
        &\quad + \tilde G_{0,j}(y)a_0^J(t,y)+  \sum_{i =1}^J \tilde G_{i,j}(y) a_i^J(t,y),
    \end{split}
\end{align}
where $j=1,\dots,J$ together with the boundary and initial conditions \eqref{eq: infinite coefficient bc}-\eqref{eq: infinite coefficient ic}.
\begin{remark}
    Using the properties of the eigenfunctions $\phi_j^y(x)$ and the regularity assumption on the drift function $g$, see Assumption \ref{ass: solvability FPE}, we obtain that the functions $G_i,\, G_{i,j},\,\tilde G_{i,j}\in C^2([-R,R])$ for all $i,j\in \mathbb{N}$. 
    In particular, we have that these functions are bounded in $W^{2,\infty}(-R,R)$.
    For an example with explicit computations, we refer to section \ref{sec: examples}.
\end{remark}

Now, to relate the two systems we state the following approximation result.
\begin{prop}\label{prop: galerkin approx}
    Let $a_i^R(t,y)$ denote the difference between solutions $a_i(t,y)$ of the original system and $a_i^J(t,y)$ of the truncated system.
    Moreover, let the initial data satisfy $a_{j,0}(y)\in H^2(-R,R)$ for all $j\in \mathbb{N}$ and $\sum_{j\in \mathbb{N}}a_{j,0}(y)\in H^2(-R,R)$.
    Then, for all $t>0$ there exists a small constant $0<\delta \ll 1$ and a truncation level $J_0\in \mathbb{N}$ such that for all truncations at level $J\geq J_0$ it holds that
    \begin{align*}
        \|a_i^R(t)\|_{H^2(-R,R)}\leq \delta(J_0).
    \end{align*}
    In particular it holds that $\lim_{J_0\to \infty} \delta=0$.
\end{prop}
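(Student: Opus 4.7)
The plan is to control the residuals $a_j^R$ by deriving a coupled PDE system for them, then combine a spectral-tail bound on the full solution with standard energy estimates and Gronwall. Set $a_j^J \equiv 0$ for $j > J$ and subtract \eqref{eq:infinite coefficient slow truncated}--\eqref{eq:infinite coefficient fast truncated} from \eqref{eq:infinite coefficient slow full}--\eqref{eq:infinite coefficient fast full}. For $0 \leq j \leq J$ the residual $a_j^R$ solves the same PDE as $a_j^J$ but with a ``tail forcing''
\begin{equation*}
\mathcal{E}_j^J(t,y) := -\sum_{i>J} \partial_y\bigl(G_{i,j}(y) a_i(t,y)\bigr) + \sum_{i>J} \tilde G_{i,j}(y) a_i(t,y),
\end{equation*}
(analogously for $j=0$), coupled only to $a_k^R$ with $k \leq J$; the initial data $a_j^R(0,\cdot) \equiv 0$ in that range. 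For $j > J$, we simply have $a_j^R = a_j$.

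The first main step is to bound the tail $\mathcal{T}(J_0,t) := \sum_{i > J_0} \|a_i(t,\cdot)\|_{H^2(-R,R)}^2$ uniformly for $t$ in compact intervals. Applying Parseval at each fixed $y$ with respect to the orthonormal basis $\{\phi_j^y\}_{j\in\mathbb{N}}$ in $L^2(\mathbb{R})$, one identifies $\sum_i \|a_i(t,\cdot)\|^2$ (in any $y$-norm) with a corresponding norm of $\rho(t,\cdot,\cdot)$. The hypothesis on initial data together with Lemma~\ref{lemma: general existence coef} (parabolic regularity of the full Fokker--Planck equation) yields $\rho(t,\cdot,\cdot) \in H^2$, and hence $\mathcal{T}(J_0,t) \to 0$ as $J_0 \to \infty$, uniformly on compact time intervals. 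A $y$-derivative version of the same Parseval identity, combined with the $W^{2,\infty}$ bound on $G_{i,j}, \tilde G_{i,j}$, controls $\|\mathcal{E}_j^J(t,\cdot)\|_{L^2(-R,R)}^2$ by $C\,\mathcal{T}(J,t)$.

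The second step is an $H^2$ energy estimate for the truncated residual system. I test equation \eqref{eq:infinite coefficient fast truncated} for $a_j^R$ against $a_j^R - \partial_{yy}^2 a_j^R + \partial_{yyyy}^4 a_j^R$ (or equivalently work at the $L^2$, $H^1$, $H^2$ levels separately, differentiating in $y$ and using the Dirichlet boundary conditions from \eqref{eq: infinite coefficient bc}). The diffusion term yields dissipation, the fast dissipative term $-\lambda_j\varepsilon^{-1} a_j^R$ has favorable sign, and the coupling terms $\sum_{i\leq J} \partial_y(G_{i,j} a_i^R) + \tilde G_{i,j} a_i^R$ are absorbed by Young's inequality using the uniform $W^{2,\infty}$-bound on the $G$-coefficients; summing over $j = 0,\dots,J$ gives
\begin{equation*}
\frac{d}{dt} E^J(t) \leq C\, E^J(t) + C\, \mathcal{T}(J,t), \qquad E^J(t) := \sum_{j=0}^{J} \|a_j^R(t,\cdot)\|_{H^2(-R,R)}^2,
\end{equation*}
with $E^J(0) = 0$. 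Gronwall then gives $E^J(t) \leq C(t)\, \sup_{s\leq t} \mathcal{T}(J,s)$, while for $j > J$ the residual is $a_j$ itself, whose $H^2$-norms are controlled by the same tail $\mathcal{T}(J,t)$. Setting $\delta(J_0) := C(t)\, \sup_{s\leq t}\mathcal{T}(J_0,s)$ proves the claim.

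The main obstacle is keeping the constants on the right-hand side independent of $J$ when summing the coupling terms: a naive estimate would produce a factor of $J$ from the $\sum_{i\leq J}$ coupling. The fix is to use Cauchy--Schwarz \emph{in the $i$-index} after integrating by parts, so that the coupling terms contribute only $C\,\|G\|_{W^{2,\infty}}^2\, E^J(t)$ rather than $C J\, E^J(t)$. A secondary technical point is the $H^2$-boundary compatibility for the Dirichlet problem when differentiating the PDE twice in $y$; this is handled by working in $H^1_0 \cap H^2$ and using elliptic regularity for the $y$-Laplacian on $(-R,R)$, both standard.
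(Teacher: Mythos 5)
Your proposal follows essentially the same route as the paper's proof: both derive the residual system (with zero initial data for the retained modes and $a_k^R=a_k$ for the discarded ones), establish smallness of the spectral tail from the regularity supplied by Lemma~\ref{lemma: general existence coef}, and close with an energy estimate plus a Gronwall argument. You are in fact somewhat more explicit than the paper on two points it glosses over, namely carrying the estimate at the $H^2$ level (the paper tests only against $a_i^R$ in $L^2$ although the stated bound is in $H^2(-R,R)$) and keeping the coupling constants independent of the truncation level $J$ via Cauchy--Schwarz in the summation index.
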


\begin{proof}
    From Lemma \ref{lemma: general existence coef} it follows that
    $$v^\varepsilon,\,w^\varepsilon \in C^1((0,T);L^2(\Omega))\cap C([0,T];H^2(\Omega)).$$
    Then, by definition of the coefficients $a_j$ we have that
    $$\sum_{i\in\mathbb{N}} a_j(t,y)\in C^1((0,T);L^2(-R,R))\cap C([0,T];H^2(-R,R)).$$
    In particular,
    $$\partial_t\sum_{i\in\mathbb{N}} a_j(t,y)\in C((0,T);L^2(-R,R)) $$
    and hence by equations \eqref{eq:infinite coefficient slow full}-\eqref{eq:infinite coefficient fast full} it follows that 
    \begin{align*}
        \sum_{i \in \mathbb{N}} \partial_y(G_{i}(y) a_{i}(t,y)),  \sum_{i \in \mathbb{N}} \partial_y \big( G_{i,j}(y) a_i(t,y)\big), \sum_{i \in \mathbb{N}} \tilde G_{i,j}(y) a_i(t,y)\in C((0,T);L^2(-R,R)),
    \end{align*}
    for all $j\in \mathbb{N}$.
    Thus, there exists a $J_0\in \mathbb{N}$ such that
    \begin{align*}
       \sup_{t\in (0,T)} \bigg\| \sum_{i =J_0+1}^\infty \partial_y(G_{i}(y) a_{i}(t,y))\bigg\|_{L^2}\leq \delta,
    \end{align*}
    and similar for the other two terms.
    Then, the statement follows by using an energy estimate on the difference between the solutions to the two systems.
    To be more precise we set $a_i^R:=a_i-a_i^J$ for all $i\in \mathbb{N}$ to be the difference between the full and the truncated system. 
    \begin{align*}
        \partial_t   a_0^R(t,y)&=C_0\frac{\sigma_2^2}{2}\partial_{y,y} a_0^R(t,y)  -  \sum_{i =0}^{J_0} \partial_y(G_{i}(y) a_{i}^R(t,y)) -\sum_{i =J_0+1}^{\infty} \partial_y(G_{i}(y) a_{i}^R(t,y)) , \\
    \partial_t a_j^R(t,y)  &= \frac{\sigma_2^2}{2}  \partial_{y,y} a_j^R(t,y) - \frac{\lambda_j}{\varepsilon}a_j^R(t,y)  -  \sum_{i =0}^{J_0}\partial_y \big( G_{i,j}(y) a_i^R(t,y)\big) - \sum_{i =J_0+1}^{\infty}\partial_y \big( G_{i,j}(y) a_i^R(t,y)\big)\\
        &\quad + \sum_{i =0}^{J_0} \tilde G_{i,j}(y) a_i^R(t,y)+  \sum_{i =J_0+1}^{\infty} \tilde G_{i,j}(y) a_i^R(t,y),~~\textnormal{for } j=1,\dots,J_0,\\
        \partial_t a_k^R(t,y)  &= \frac{\sigma_2^2}{2}  \partial_{y,y} a_k^R(t,y) - \frac{\lambda_j}{\varepsilon}a_k^R(t,y)  -  \sum_{i =0}^{J_0}\partial_y \big( G_{i,j}(y) a_i^R(t,y)\big) - \sum_{i =J_0+1}^{\infty}\partial_y \big( G_{i,j}(y) a_i^R(t,y)\big)\\
        &\quad + \sum_{i =0}^{J_0} \tilde G_{i,j}(y) a_i^R(t,y)+  \sum_{i =J_0+1}^{\infty} \tilde G_{i,j}(y) a_i^R(t,y),~~\textnormal{for } k=J_0+1,\dots,
    \end{align*}
    with Neumann boundary conditions and the following initial conditions
    \begin{align*}
        a_0^R(0)=0,\quad a_j^R(0)=0,~ \textnormal{for } j=1,\dots,J_0, \quad  a_k^R(0)=a_k(0) ~\textnormal{for } k=J_0+1,\dots.
    \end{align*}
    Then, testing each equation with the respective $a_i^R$ yields
    \begin{align*}
        \frac{1}{2}\frac{\textnormal{d}}{\textnormal{d}t} \|a_0^R\|^2 +C_0\frac{\sigma_2^2}{2}\|\partial_y a_0^R\|^2&\leq \|\partial_y a_0^R\|\sum_{i=1}^{J_0} \|G_i\|_{\infty}\| a_i^R\| +\delta\|a_0^R\| \\
        \frac{1}{2}\frac{\textnormal{d}}{\textnormal{d}t} \|a_j^R\|^2 +\frac{\sigma_2^2}{2}\|\partial_y a_j^R\|+ \frac{\lambda_j}{\varepsilon}\|a_j^R\|&\leq  \|\partial_y a_j^R\|\sum_{i=0, i\neq j}^{J_0} \|G_{i,j}\|_{\infty}\| a_i^R\|+ \|a_j^R\|\big(\sum_{i=0}^{J_0} \|\tilde G_{i,j}\|_{\infty}\|a_i^R\|+2\delta\big),
    \end{align*}
    for $j=1,\dots,J_0$. 
    Adding the estimates and using Young's inequality to control the terms on the right-hand side of the inequality yields
    \begin{align*}
        \frac{1}{2}\sum_{j=0}^{J_0} \frac{\textnormal{d}}{\textnormal{d}t} \|a_j^R\|^2 &\leq 2\delta \sum_{j=0}^{J_0} \|a_j^R\|
    \end{align*}
    and therefore, $\|a_j^R(t)\|\leq C(T)\delta^2$, which can be seen as a worst case estimate.
    We can use this estimate for the remainder of the sum which we estimate as follows
    \begin{align*}
        \frac{1}{2}\frac{\textnormal{d}}{\textnormal{d}t} \|a_k^R\|^2 +\frac{\sigma_2^2}{2}\|\partial_y a_k^R\|+ \frac{\lambda_j}{\varepsilon}\|a_k^R\|&\leq  \|\partial_y a_k^R\|\sum_{i=0, i\neq j}^{J_0} \|G_{i,j}\|_{\infty} C(T)\delta^2+ \|a_k^R\|\big(\sum_{i=0}^{J_0} \|\tilde G_{i,j}\|_{\infty}C(T)\delta^2+2\delta\big),
    \end{align*}
    for all $k=J_0+1\dots$.
    This expression can be further simplified as
    \begin{align*}
         \|a_k^R(t)\|^2 \leq \delta C(T) ~~\textnormal{ for all } ~k=J_0+1\dots,
    \end{align*}
    where the constant $C(T)$ depends on the final time $T$ and the nonlinear function $g$, but is independent of the truncation $J_0$.
\end{proof}

\begin{remark}
    We want to emphasize that the truncation method presented here differs from the Galerkin approximation of a fast-slow system in \cite{engel2021connecting,engel2022geometric}.
    As we apply the spectral decomposition in the stochastic reduction method only in one spatial direction we obtain an infinite system of coupled PDEs. 
    After truncating the system at a finite level $J_0\in \mathbb{N}$ we are left with a system of $J_0+1$ PDEs.
    Now, in order to obtain a geometric understanding of the fast-slow system we have to apply the results from section \ref{sec:abstract theory}, as the standard Fenichel theory only applies for systems of ODEs.
\end{remark}

\begin{remark}
    A different method to prove Proposition \ref{prop: galerkin approx} is to directly use estimates for the spectral Galerkin approximation, see \cite{funaro1991approximation,guo1999error,ma2005hermite} for details.
    However, the current proof presented above provides more insights in the structure of the approximate system, which will be useful when comparing the dynamics of the truncated system with the ones of the original system.
\end{remark}

%\newpage
Now, we can take the formal limit of the original system as $\varepsilon\to 0$, which is given by
\begin{align}
    \partial_t   a_0^0(t,y)&=C_0\frac{\sigma_2^2}{2}\partial_{y,y} a_0^0(t,y) -  \partial_y (G_{0}(y)  a_0^0(t,y)),\quad (y,t)\in (-R,R)\times (0,T],\\
    a_0^0(t,-R)&=a_0^0(t,R)=0,\quad t\in (0,T] ,\\
    a_0^0(0,y)&=a_{0}(y), \quad y\in (-R,R).
\end{align}

\begin{remark}
    Note, that we would obtain the same system by taking the limit as $\varepsilon \to 0$ in the truncated system.
\end{remark}

To show the existence of a slow manifold for the truncated system we need to check the assumptions of Section \ref{Sec:assumptions} and connect it with the setting of system \eqref{eq: infinite coefficient bc}-\eqref{eq:infinite coefficient fast truncated}.
Thus, we have that\\
\begin{itemize}
    \item The slow variable is given by $a_0$ and the fast variables are given by $a_1,\dots, a_{J_0}$.
    As underlying spaces we choose 
    $$Y=L^2(-R,R)\quad \textnormal{for the slow variable space and }   X^j= X=L^2(-R,R)~~\textnormal{for } j=1,\dots,J_0,$$
    for the fast variable space.
    % More on these infinite Banach spaces can be found in \cite{pugliese2003well,pugliese2004thresholds}
    \item The linear operator in the slow equation is given by 
    \begin{align*}
        B:L^2(-R,R)\supset H^2(-R,R)\cap H^1_0(-R,R)\to L^2(-R,R),\quad v\mapsto \partial_{y,y} v.
    \end{align*}
    The linear operator in the fast equation is given by
    \begin{align*}
        A^j:X\supset D(A^j)=X_1 \to X,\quad u_j\mapsto \varepsilon \frac{\sigma_2^2}{2}\partial_{y,y} u_j -\lambda_j u_j,~~\textnormal{for }j=1,\dots,J_0,
    \end{align*}
    where $D(A^j)=\{a^j\in X:\, a_j\in H^2(-R,R)\cap H^1_0(-R,R)\}$.
    Hence, both operators are closed and bounded.
    \item The spaces of Banach scales are given by $Y_\alpha=H^{2\alpha}(-R,R)\cap H^\alpha_0(-R,R)$ and $X^j_\alpha= \{a_j\in X:\, a_j\in H^{2\alpha}(-R,R)\cap H^\alpha_0(-R,R)\}$ for $\alpha\in [0,1]$.
    \item The Laplacian with zero Dirichlet boundary condition generates a bounded analytic semigroup on any space $H^{2\alpha}(-R,R)\cap H^\alpha_0(-R,R)$.
    Moreover, there are constants $C_A,C_B$ such that 
    \begin{align}
        \|\textnormal{e}^{A_j t}\|_{\mathcal{B}(X_{1/2},X_1)}\leq C_{A_j}\ t^{-1/2} \textnormal{e}^{\varepsilon \omega_{A,j}-\lambda_j t},\quad   \|\textnormal{e}^{B t}\|_{\mathcal{B}(X_{1/2},X_1)}\leq C_{B}\ t^{-1/2} \textnormal{e}^{\omega_B t},
    \end{align}
    where we can choose $\omega_{A,j}=\omega_B=0$ and we set $\gamma=\delta=1/2$.
    \item Next, we define the nonlinearities $\mathcal{F}_j: X_1 \times Y_1\to X_{1/2}$ for $j=1,\dots,J_0$  and $\mathcal{G}: X_1 \times Y_1\to Y_{1/2}$, where
    \begin{align*}
        \mathcal{G}(u,v)&= -\partial_y \big(G_{0}(y) v\big)-\sum_{i=1}^{J_0}\partial_y \big(G_{i}(y) u_i\big),
        \intertext{and}
        \mathcal{F}_j(u,v)&= - \partial_y \big( G_{0,j}(y) v\big) - \sum_{i \in 1}^{J_0}\partial_y \big( G_{i,j}(y)u_i\big)+ \tilde G_{0,j}(y)v + \sum_{i=1}^{J_0}\tilde G_{i,j}(y) u_i.   
    \end{align*}
    As the functions $\mathcal{F}_j$ and $\mathcal{G}$ are indeed linear it then follows from the assumptions on the drift function $g(x,y)$ in \eqref{eq: fast-slow SDE} that there exist constants $L_{\mathcal{F},j},L_\mathcal{G}>0$ such that
    \begin{align*}
         \|\textnormal{D} \mathcal{F}_j(u,v)\|_{\mathcal{B}(X_{1}\times Y_{1},X_{1/2})}&\leq L_{\mathcal{F}_j},\quad  \|\textnormal{D} \mathcal{G}(u,v)\|_{\mathcal{B}(X_{1}\times Y_{1},Y_{1/2})}\leq L_\mathcal{G}.
    \end{align*}
\end{itemize}
Thus, we can apply the convergence result of Lemma \ref{lemma: convergnce of sol} in the setting of the fast-slow coefficient system and obtain the following.
\begin{theorem}[Convergence of Solutions]\label{thm: converegnce of sol}
   Let $J_0\in \mathbb{N}$ be the truncation level. Then, under the assumptions of Section \ref{sec: problem set up} and assuming that the initial conditions satisfy 
   \begin{align*}
       a_0(0,y)=a_0^0(y,0)=a_0(y)\in H^2(-R,R),\quad \sum_{j=1}^{J_0} a_j(y,0)\in H^2(-R,R)
   \end{align*}
  the following estimate holds true for all $t>0$ 
   \begin{align}
       \begin{split}
           \sum_{j=1}^{J_0}\|a_j(t)\|_{H^2(-R,R)}+ \|a_0(t)-a^0_0(t)\|_{H^2(-R,R)}&\leq C\varepsilon^{1/2}\bigg( \sum_{j=1}^{J_0}\|a_j(0)\|_{H^2(-R,R)} +\|a_0(0)\|_{H^2(-R,R)}\bigg)\\
           &\quad + C \sum_{j=1}^{J_0} \textnormal{e}^{-\varepsilon^{-1}\lambda_j t}\|a_j(0)\|_{H^2(-R,R)}.
       \end{split}
   \end{align}
\end{theorem}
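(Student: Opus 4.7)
The plan is to verify that the truncated system~\eqref{eq:infinite coefficient slow truncated}--\eqref{eq:infinite coefficient fast truncated} falls into the abstract framework of Section~\ref{Sec:assumptions} and then invoke Lemma~\ref{lemma: convergnce of sol} directly, refining the bound on the fast component to keep the per-index exponential rate $\lambda_j$. The bullet-list verification immediately before the theorem already identifies the abstract ingredients: set $v^\varepsilon := a_0^\varepsilon$ in $Y = L^2(-R,R)$ with $B = C_0\frac{\sigma_2^2}{2}\partial_{yy}$ (Dirichlet), and $u^\varepsilon := (a_1^\varepsilon,\dots,a_{J_0}^\varepsilon)$ in the product space $X = L^2(-R,R)^{J_0}$ with the block-diagonal operator $A = \mathrm{diag}(A^1,\dots,A^{J_0})$, $A^j = \varepsilon\frac{\sigma_2^2}{2}\partial_{yy} - \lambda_j$. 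With the choice $\gamma = \delta = 1/2$, the excerpt has already checked analyticity, the interpolation scales $Y_{1/2} = H^1_0(-R,R)$, $Y_1 = H^2 \cap H^1_0$ (and likewise for $X$), and that $\mathcal{F}_j, \mathcal{G}$ are globally Lipschitz with $L_{\mathcal{F}_j}, L_\mathcal{G} < \infty$ by the $W^{2,\infty}$ bounds on $G_i, G_{i,j}, \tilde G_{i,j}$.

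With these identifications $\omega_A = -\min_{1\le j\le J_0}\lambda_j < 0$, so Lemma~\ref{lemma: convergnce of sol} applies and yields the bound
\begin{align*}
\|u^\varepsilon(t)\|_{X_1} + \|a_0^\varepsilon(t) - a_0^0(t)\|_{Y_1} \le C\,e^{\omega_A t/\varepsilon}\|u^\varepsilon_0\|_{X_1} + C\varepsilon^{1/2}\bigl(\|u^\varepsilon_0\|_{X_1} + \|a_0(0)\|_{Y_1}\bigr),
\end{align*}
which, after identifying $\|\cdot\|_{X_1}$ with $\sum_{j=1}^{J_0}\|\cdot\|_{H^2}$ and $\|\cdot\|_{Y_1}$ with $\|\cdot\|_{H^2}$, produces the second ($\varepsilon^{1/2}$) term on the right-hand side of the theorem. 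The main refinement is to replace the single exponential $e^{\omega_A t/\varepsilon}$ by the component-wise sum $\sum_j e^{-\lambda_j t/\varepsilon}\|a_j(0)\|_{H^2}$. This uses the block-diagonal structure of $A$: writing each fast component in mild form,
\begin{align*}
a_j^\varepsilon(t) = e^{A^j t/\varepsilon}a_j(0) + \int_0^t e^{A^j(t-s)/\varepsilon}\mathcal{F}_j\bigl(u^\varepsilon(s), v^\varepsilon(s)\bigr)\,\mathrm{d}s,
\end{align*}
the homogeneous term is bounded by $Me^{-\lambda_j t/\varepsilon}\|a_j(0)\|_{H^2}$ (since $A^j$ inherits the shift $-\lambda_j$), and the convolution term is absorbed into the $\varepsilon^{1/2}$ part through precisely the Gronwall argument used in Lemma~\ref{lemma: convergnce of sol}. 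Summing over $j = 1,\dots,J_0$ and appending the slow-variable estimate yields the claimed inequality, and global-in-time validity follows because the system is linear so $T_* = \infty$.

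The main (modest) obstacle is keeping the sharp index-dependent rate $\lambda_j$ in the homogeneous term rather than collapsing everything to the slowest rate $\min_j \lambda_j$. The resolution, as sketched, is to exploit that $A$ is block-diagonal, so the propagator $e^{A^j(t-s)/\varepsilon}$ acts only on the $j$-th slot; the coupling to the other fast components occurs only through the source $\mathcal{F}_j$, which contributes to the $\mathcal{O}(\varepsilon^{1/2})$ error and not to the homogeneous decay. A secondary technical point is that Lemma~\ref{lemma: convergnce of sol} is stated for a \emph{single} unbounded $A$; one either applies it in the product space (accepting the coarser rate and then re-running the first step of its proof to extract the sharper per-component rate) or, equivalently, applies it separately to each $j$ treating the remaining $a_i^\varepsilon$ ($i \ne j$) as known inhomogeneities bounded by the already-established $\mathcal{O}(\varepsilon^{1/2})$ estimate.
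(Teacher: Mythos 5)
Your proposal follows exactly the route the paper takes: the theorem is presented as an immediate consequence of Lemma \ref{lemma: convergnce of sol} once the bullet-list verification identifies $a_0$ as the slow variable, $(a_1,\dots,a_{J_0})$ as the fast block with $A^j=\varepsilon\frac{\sigma_2^2}{2}\partial_{yy}-\lambda_j$, and $\gamma=\delta=1/2$. Your additional care in extracting the per-index decay rate $\textnormal{e}^{-\varepsilon^{-1}\lambda_j t}$ from the block-diagonal structure (rather than the coarser uniform rate $\min_j\lambda_j$) is a detail the paper leaves implicit, and it is handled correctly.
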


\begin{remark}
    Combining the results from Proposition \ref{prop: galerkin approx} and Theorem \ref{thm: converegnce of sol} also yields the convergence of solutions of the full system to solutions of the limit system.
\end{remark}

Now, it remains to check the assumptions for the existence of a slow manifold. Let $\zeta>0$ be the parameter that induces the splitting of the slow variable $a_0(t,y)$ satisfying $\varepsilon\zeta^{-1}<1$.
Then, we introduce the splitting as follows
\begin{align}
    Y_S^\zeta&:= \textnormal{span}\{[y\mapsto \sin(k\pi y/R)],\, k\in \mathbb{N}, k\leq k_0-1\},\\
    Y_F^\zeta&:=\textnormal{cl}_{L^2(-R,R)}\big( \textnormal{span}\{[y\mapsto \sin(k\pi y/R)],\, k\in \mathbb{N}, k\geq k_0\}\big),
\end{align}
where $-(k_0+1)^2<-\zeta^{-1}\min_{j}\lambda_j\leq -k_0^2$ for some $k_0\in \mathbb{N}$.
\\
The rest of the assumptions and estimates of the operator follows along the same lines as in \cite{hummel2022slow,kuehn2023fast}. 
The last assumption we need to check is then the spectral gap condition \eqref{spectral gap condition}, where we have
\begin{align}\label{eq: condition J spectral gap}
  L_{\textnormal{spec}}= \sum_{j=1}^{J_0} \frac{\varepsilon 2^{1/2}\Gamma(1/2) L_{\mathcal{F}_j}}{(2(\varepsilon\zeta^{-1} -1)(-\lambda_j)+ \varepsilon (N_S^\zeta+N_F^\zeta) )^{1/2}} + \frac{2^{1/2} L_\mathcal{G} \Gamma(1/2)}{(N_S^\zeta-N_F^\zeta)^{1/2}}+\frac{2L_\mathcal{G}  \Gamma(1/2)}{N_S^\zeta-N_F^\zeta},
\end{align}
for which we require $ L_{\textnormal{spec}}<1$.
By choosing
\begin{align*}
    N_S^\zeta= \zeta^{-1}\min_{j\in \mathbb{N}\setminus\{0\}}(\lambda_j)-(k_0-1)^2,\quad N_F^\zeta= \zeta^{-1}\min_{j\in \mathbb{N}\setminus\{0\}}(\lambda_j)-k_0^2+k_0-1.
\end{align*}
we have 
\begin{align*}
    N_S^\zeta-N_F^\zeta=k_0\geq \zeta^{-1/2}
\end{align*}
and thus we can control the last two terms of $ L_{\textnormal{spec}}$.
Moreover, the first term in the spectral gap estimate is controlled by fixing the truncation level $J_0$.

% However, in order to control the first term we either need that $\sum_j \frac{L_{\mathcal{F}_j}}{\lambda_j^{(1/2)}}$ converges, which cannot be assumed for a general case and is usually not true as we will see in the examples, or we can only consider a finite number of coefficients.
% Therefore, let $j\in \mathbb{N}$ be such that for all $J\leq J$ we have 
% \begin{align}\label{eq: condition J}
%    \sum_{j=1}^J \frac{\varepsilon  L_{\mathcal{\mathcal{F}}_j}2^{1/2}\Gamma(1/2)}{((1-\varepsilon\zeta^{-1} )\lambda_j+ \varepsilon \zeta^{-1} )^{1/2}}<1.
% \end{align}

% \begin{remark}
%     The occurrence of such a phenomena is not surprising as it also arises when dealing with Galerkin approximations of the slow manifold for other fast-slow PDE systems \cite{engel2021connecting,engel2022geometric}.
% \end{remark}

\begin{remark}
    Setting $\varepsilon=\zeta$ the above spectral gap condition reduces to the following problem
    \begin{align}\label{eq: condition J simplified}
      \sqrt{2\pi} \varepsilon^{1/2}\sum_{j=1}^{J_0} L_{\mathcal{F}_j}+  \sqrt{2\pi} \varepsilon^{1/2}L_\mathcal{G} +2\sqrt{\pi} \varepsilon L_\mathcal{G}=C \varepsilon^{1/2}<1,
    \end{align}
    which can be controlled for any finite $J_0$ by choosing $0<\varepsilon\ll 1$ sufficiently small.
    Here, we notice the importance of truncating the original system at a finite $J_0\in \mathbb{N}$ as we can control the spectral gap  for a fixed parameter $\varepsilon$ only for a finite number of Lipschitz constants $L_{\mathcal{F}_j}$.
    If one wants to consider both limits, i.e. the double singular limit  when $\varepsilon\to 0$ and $J_0\to \infty$ we refer the reader to \cite{kuehn2022general} for the details and intricacies that arise in this context.
\end{remark}

Hence, we obtain the following system with respect to the splitting into a fast and slow component
\begin{align}\label{eq:infinite coefficient splitting}
\begin{split}
     \partial_t   a_{0,S}(t,y)&=C_0\frac{\sigma_2^2}{2}\partial_{y,y}  a_{0,S}(t,y) -  \pr_{Y_S^\zeta}\partial_y (G_{0}(y)  a_0(t,y)) - \pr_{Y_S^\zeta} \sum_{i=1}^J \partial_y(G_{i}(y) a_{i}(t,y)), \end{split}\\ \begin{split}
      \partial_t   a_{0,F}(t,y)&=C_0\frac{\sigma_2^2}{2}\partial_{y,y}  a_{0,F}(t,y) -  \pr_{Y_F^\zeta}\partial_y (G_{0}(y)  a_0(t,y)) - \pr_{Y_F^\zeta} \sum_{i=1}^J \partial_y(G_{i}(y) a_{i}(t,y)), \end{split}\\ \label{eq:infinite coefficient slpitting end}
     \begin{split}   \partial_t a_j(t,y)  &= \frac{\sigma_2^2}{2}  \partial_{y,y} a_j(t,y) - \frac{\lambda_j}{\varepsilon}a_j(t,y)  - \partial_y \big( G_{0,j}(y) a_0(t,y)\big) - \sum_{i=1}^J\partial_y \big( G_{i,j}(y) a_i(t,y)\big)\\
        &\quad + \tilde G_{0,j}(y)a_0(t,y)+ \sum_{i=1}^J\tilde G_{i,j}(y) a_i(t,y),
    \end{split}
\end{align}
together with the initial conditions \eqref{eq: infinite coefficient ic} and boundary conditions \eqref{eq: infinite coefficient bc}.
Here $(t,y)\in (0,T]\times (-R,R)$ and $0<j\leq J$.
The well-posedness of this system follows from standard parabolic theory and the convergence of solutions to the approximate system \eqref{eq:infinite coefficient splitting}-\eqref{eq:infinite coefficient slpitting end} to solutions of the original system can be shown by using the standard arguments from Galerkin approximation results.

% \begin{remark}
%     We observe that as the sequence $(\lambda_j)_{j\in\mathbb{N}}$ is increasing and unbounded there exist $(J_n)_{n\in \mathbb{N}} \in \mathbb{N}$ such that $J_n\sim \mathcal{O}(\varepsilon^{-n})$.
%     Hence, the full system is in fact a multiple time-scale system with infinite orders. This means that for large $j\in \mathbb{N}$, i.e. the fastest timescales, the convergence to zero is much quicker than on the other time scales.
%     Thus, it is reasonable to assume that $\|a_j(t)\|_{L^2(-R,R)}=\mathcal{O}(\varepsilon)$ for all times $t>0$, for $j>J$.
% \end{remark}

% \begin{remark}
% Following the results in \cite{funaro1991approximation,guo1999error,ma2005hermite} we can give a precise estimate on the convergence of the approximate solutions $\rho_J(t,x,y)$ given by $\sum_{j=0}^J a_j(t,y)\phi_j^y(x)$, where the coefficients $\big(a_j(t,y)\big)_{j=0}^J$ satisfy \eqref{eq:infinite coefficient splitting}-\eqref{eq:infinite coefficient slpitting end}, and the true solutions $\rho(t,x,y)$.
% It holds that
% \begin{align*}
%     \|\rho_J(t)-\rho(t)\|_{L^2}\leq C(t, \rho_0) J^{-1}
% \end{align*}
% and thus system \eqref{eq:infinite coefficient splitting}-\eqref{eq:infinite coefficient slpitting end} approximates the infinite system up to any order.
% Moreover, it is reasonable to assume that the dynamics of the approximate system follow the dynamics of the full system as well.
% \end{remark}

Then, for this system we can show the following result for the existence of a slow manifold.
\begin{theorem}[Existence of Slow Manifold]\label{thm: slow manifold fpe}
    Let $\varepsilon_0>0$ be given and let $\zeta>0$ be the parameter that induces the splitting of the slow variable $a_0(t,y)$ satisfying $0<\varepsilon_0 \zeta^{-1}\leq 1$.
    Moreover let $J_0\in \mathbb{N}$ be chosen such that the spectral gap condition \eqref{eq: condition J spectral gap} and the assumption of Theorem \ref{thm: converegnce of sol} hold.
    Then, for all $\varepsilon\in (0,\varepsilon_0] $ and all $J\leq J_0$ there exists a family of slow manifolds $S_{\varepsilon,\zeta}^J$ for the fast-slow system \eqref{eq:infinite coefficient splitting}-\eqref{eq:infinite coefficient slpitting end} together with the boundary and initial conditions \eqref{eq: infinite coefficient bc}-\eqref{eq: infinite coefficient ic}.
    The slow manifold is then given by
    \begin{align}
        S_{\varepsilon,\zeta}^J=\big\{ \big(h^{\varepsilon,\zeta,J}_1(a_{0,S}),\dots,h^{\varepsilon,\zeta,J}_J(a_{0,S}),h^{\varepsilon,\zeta,J}_F(a_{0,S}),a_{0,S}\big):\, a_{0,S}\in Y_1\cap Y_S^\zeta\big \} ,
    \end{align}
    where $h^{\varepsilon,\zeta,J}$ denotes the fixed point of the Lyapunov-Perron operator (see Lemma \ref{lemma: lyapunov perron}).
\end{theorem}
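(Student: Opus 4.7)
The proof proposal is to reduce everything to the abstract Fenichel-type result, Theorem \ref{thm: Fenichel abstract}, applied to the truncated split coefficient system \eqref{eq:infinite coefficient splitting}-\eqref{eq:infinite coefficient slpitting end}. Most of the bookkeeping has already been set up in the bullet points that precede the theorem; the task is to assemble these pieces, check the remaining hypotheses of Section \ref{Sec:assumptions}, and then read off the slow manifold from the Lyapunov--Perron fixed point of Lemma \ref{lemma: lyapunov perron}.

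First, I would fix the functional-analytic framework. The fast variable is $u=(a_1,\dots,a_J)\in X_1^J$ with $X=L^2(-R,R)$, and the slow variable is $v=a_0\in Y=L^2(-R,R)$. The linear operators $A^j = \varepsilon(\sigma_2^2/2)\partial_{y,y}-\lambda_j\mathrm{I}$ and $B = C_0(\sigma_2^2/2)\partial_{y,y}$ with zero Dirichlet boundary conditions are each sectorial; together they generate exponentially stable / bounded analytic $C_0$-semigroups on the respective Sobolev scales $X_\alpha = Y_\alpha = H^{2\alpha}(-R,R)\cap H^{\alpha}_0(-R,R)$, with $\gamma=\delta=1/2$. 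Since the nonlinearities $\mathcal F_j$ and $\mathcal G$ are linear in $(u,v)$ with coefficients in $W^{2,\infty}(-R,R)$, their Fr\'echet derivatives are bounded $X_1\times Y_1 \to X_{1/2}$ respectively $\to Y_{1/2}$, giving Lipschitz constants $L_{\mathcal F_j}, L_{\mathcal G}$. These are exactly the operator and nonlinearity hypotheses of Section \ref{Sec:assumptions}.

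Second, I would install the splitting $Y=Y_F^\zeta\oplus Y_S^\zeta$ prescribed before the theorem, using the $\sin(k\pi y/R)$ eigenbasis of the Dirichlet Laplacian with cut-off $k_0$ chosen so that $-(k_0+1)^2 < -\zeta^{-1}\min_j \lambda_j \leq -k_0^2$. One verifies in turn: closedness of $Y_F^\zeta,\,Y_S^\zeta$ in $Y$ and in $Y_1$; commutation of the $L^2$-orthogonal projections $\mathrm{pr}_{Y_F^\zeta}, \mathrm{pr}_{Y_S^\zeta}$ with $B$ on $Y_1$; invertibility of $B|_{Y_F^\zeta}$ (its spectrum is bounded away from $0$); and the fact that $B|_{Y_S^\zeta}$, acting on a finite-dimensional space, generates a $C_0$-group coinciding with $e^{tB}$ for $t\geq 0$. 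The choice
\begin{equation*}
N_S^\zeta = \zeta^{-1}\min_{j}\lambda_j - (k_0-1)^2, \qquad N_F^\zeta = \zeta^{-1}\min_{j}\lambda_j - k_0^2 + k_0 - 1,
\end{equation*}
produces the desired splitting estimates with $N_S^\zeta - N_F^\zeta \geq \zeta^{-1/2}$ (by the defining inequality for $k_0$), and the condition $\varepsilon\zeta^{-1}\leq 1$ gives $(1-\varepsilon\zeta^{-1})\omega_A-\frac{\varepsilon}{2}(N_S^\zeta+N_F^\zeta)<0$.

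The main obstacle is the spectral gap condition \eqref{eq: condition J spectral gap}. The first sum over the fast components carries a factor $\varepsilon$ in the numerator, so shrinking $\varepsilon$ makes that piece small for any fixed truncation $J\leq J_0$; the last two terms are controlled by $(N_S^\zeta - N_F^\zeta)^{-1/2}\leq \zeta^{1/4}$ and $(N_S^\zeta - N_F^\zeta)^{-1}\leq \zeta^{1/2}$. Hence, by first choosing $\zeta$ small enough to beat the $\mathcal G$-terms and then shrinking $\varepsilon_0\leq \zeta$, one achieves $L_{\mathrm{spec}}<1$ uniformly in $\varepsilon\in(0,\varepsilon_0]$; this is precisely where the finiteness of $J_0$ enters. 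This is the only delicate step, as it couples the truncation level with the two small parameters. Once $L_{\mathrm{spec}}<1$ is secured, Lemma \ref{lemma: lyapunov perron} yields a unique fixed point
\begin{equation*}
(h^{\varepsilon,\zeta,J}_{1}(a_{0,S}),\dots,h^{\varepsilon,\zeta,J}_{J}(a_{0,S}), h^{\varepsilon,\zeta,J}_{F}(a_{0,S}), a_{0,S})
\end{equation*}
of the Lyapunov--Perron operator on the appropriate weighted exponential space $C_\eta$, and Theorem \ref{thm: Fenichel abstract} immediately provides the Lipschitz regularity, the $\mathcal O(\varepsilon)$ distance to the critical manifold $\{u=0\}$, local exponential attraction, and semi-flow convergence for the manifold
\begin{equation*}
S_{\varepsilon,\zeta}^J = \{(h^{\varepsilon,\zeta,J}_{1}(a_{0,S}),\dots,h^{\varepsilon,\zeta,J}_{J}(a_{0,S}), h^{\varepsilon,\zeta,J}_{F}(a_{0,S}), a_{0,S}) : a_{0,S}\in Y_1\cap Y_S^\zeta\},
\end{equation*}
which is the claimed statement. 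The higher $C^k$-smoothness when $g$ (and thus the coefficient functions $G_i, G_{i,j}, \tilde G_{i,j}$) is sufficiently regular follows from the $C^k$-statement of Theorem \ref{thm: Fenichel abstract}(i) applied to the now-smooth linear nonlinearities $\mathcal F_j, \mathcal G$.
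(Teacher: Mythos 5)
Your proposal is correct and follows essentially the same route as the paper: the paper's (implicit) proof of Theorem \ref{thm: slow manifold fpe} consists precisely of the verification of the abstract assumptions of Section \ref{Sec:assumptions} for the truncated coefficient system, the installation of the sine-basis splitting $Y=Y_F^\zeta\oplus Y_S^\zeta$ with the stated choices of $k_0$, $N_S^\zeta$, $N_F^\zeta$, the check of the spectral gap condition \eqref{eq: condition J spectral gap} using the finiteness of $J_0$, and the application of Lemma \ref{lemma: lyapunov perron} and Theorem \ref{thm: Fenichel abstract}. Your write-up assembles exactly these ingredients in the same order, so no gap remains.
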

The dynamics on the slow manifold are then described by the following dimensionally reduced system for sufficiently small $\varepsilon$
\begin{align}\begin{split}
    \partial_t  a_{0,S}(t,y)&= C_0\frac{\sigma_2^2}{2}\partial_{y,y}  a_{0,S}(t,y) -  \pr_{Y_S^\zeta}\partial_y (G_{0}(y)  (a_{0,S}(t,y)+h^{\varepsilon,\zeta,J}_F(a_{0,S}(t,y)) ) \\
    &\quad -\pr_{Y_S^\zeta} \sum_{i\in 1} ^J\partial_y(G_{i}(y)h^{\varepsilon,\zeta,J}_i(a_{0,S}(t,y))), \end{split}\\
    a_{0,S}(t,-R)&= a_{0,S}(t,R)=0,\quad  a_{0,S}(0,y)= \pr_{Y_S^\zeta} a_{0,0}(y).
\end{align}

Now, we can combine the results to show that there exists a slow manifold for any $\varepsilon\in (0,\varepsilon_0]$ such that the error in the truncation is also bounded by $\varepsilon$.
\begin{corollary}
    There exists $\varepsilon\in (0,\varepsilon_0]$ and a truncation $J=J(\varepsilon)$ such that the result of Theorem \ref{thm: slow manifold fpe} holds for $\zeta=\varepsilon$, i.e. there exists a slow manifold $S_{\varepsilon}^{J(\varepsilon)}$, and moreover the difference between the original and truncated system satisfies
    \begin{align*}
        \|a_i(t)-a^{J(\varepsilon)}_i(t)\|_{H^2(-R,R)} \leq C \varepsilon.
    \end{align*}
\end{corollary}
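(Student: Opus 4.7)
The plan is to couple the two scales—the truncation level $J$ and the small parameter $\varepsilon$—by choosing $J=J(\varepsilon)$ so that Proposition \ref{prop: galerkin approx} delivers the $\mathcal{O}(\varepsilon)$-accurate truncation, and then verifying that this very choice remains compatible with the simplified spectral gap condition \eqref{eq: condition J simplified} required by Theorem \ref{thm: slow manifold fpe} at $\zeta=\varepsilon$.

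For the first step, Proposition \ref{prop: galerkin approx} guarantees $\delta(J_0)\to 0$ as $J_0\to\infty$. Given $\varepsilon\in(0,\varepsilon_0]$, I would set
\begin{align*}
J(\varepsilon):=\min\bigl\{J_0\in\mathbb{N}\,:\,\delta(J_0)\le c\varepsilon\bigr\}
\end{align*}
with a constant $c>0$ depending only on $T$ and the problem data. By construction,
\begin{align*}
\|a_i(t)-a_i^{J(\varepsilon)}(t)\|_{H^2(-R,R)}\le C\varepsilon\quad\text{for all } i\in\mathbb{N},~t\in(0,T].
\end{align*}

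For the second step, I would insert $J=J(\varepsilon)$ into \eqref{eq: condition J simplified} with $\zeta=\varepsilon$ and check
\begin{align*}
\sqrt{2\pi}\,\varepsilon^{1/2}\Bigl(\sum_{j=1}^{J(\varepsilon)} L_{\mathcal{F}_j}+L_\mathcal{G}\Bigr)+2\sqrt{\pi}\,\varepsilon L_\mathcal{G}<1.
\end{align*}
Under Assumption \ref{ass: solvability FPE} the Lipschitz constants $L_{\mathcal{F}_j}$ are controlled by $\|g\|_{W^{1,\infty}}$ together with bounds on the eigenfunctions $\phi_j^y$, so the partial sum grows at most linearly in $J(\varepsilon)$ and the condition reduces to $\varepsilon^{1/2}J(\varepsilon)\ll 1$. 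This is attainable for small $\varepsilon$ provided $J(\varepsilon)$ does not grow faster than $\varepsilon^{-1/2}$, which follows from the polynomial decay of $\delta(J_0)$ implied by the $H^2$-regularity of the initial data via the standard spectral approximation estimates cited in the remark following Proposition \ref{prop: galerkin approx}.

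With both conditions validated for the pair $(\varepsilon,J(\varepsilon))$, Theorem \ref{thm: slow manifold fpe} applied at $\zeta=\varepsilon$ produces the slow manifold $S_\varepsilon^{J(\varepsilon)}$, and combining with the truncation bound above closes the argument. The main obstacle is precisely the coupled constraint just described: the spectral gap condition prefers $J$ small while the accuracy demand forces $J$ large, and their simultaneous satisfaction hinges on a quantitative comparison between the decay of $\delta(J_0)$ and the growth of $\sum_{j=1}^{J(\varepsilon)} L_{\mathcal{F}_j}$—a trade-off that is not automatic from the qualitative statement of Proposition \ref{prop: galerkin approx} alone and requires the quantitative spectral approximation theory alluded to above.
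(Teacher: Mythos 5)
Your proposal follows essentially the same route the paper intends: the corollary is stated without a written proof, but the ingredients are exactly the ones you assemble — Proposition \ref{prop: galerkin approx} to choose $J(\varepsilon)$ so that the truncation error is $\mathcal{O}(\varepsilon)$, and the simplified spectral gap condition \eqref{eq: condition J simplified} at $\zeta=\varepsilon$ to invoke Theorem \ref{thm: slow manifold fpe}. You also correctly identify the real crux, namely that the purely qualitative statement $\delta(J_0)\to 0$ does not by itself resolve the competition between the accuracy requirement (pushing $J$ up) and the spectral gap requirement (pushing $J$ down), so a quantitative Galerkin rate is needed; the paper only gestures at this via the references in the remark following Proposition \ref{prop: galerkin approx}.

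One quantitative step in your verification is off, though. You claim that under Assumption \ref{ass: solvability FPE} the partial sum $\sum_{j=1}^{J}L_{\mathcal{F}_j}$ grows at most linearly in $J$, so that the spectral gap reduces to $\varepsilon^{1/2}J(\varepsilon)\ll 1$. This contradicts the paper's own worked example in Section \ref{sec: examples}, where $L_{\mathcal{F}_j}=Cj^2$ (the constants $G_{i,j}$, $\tilde G_{i,j}$ involve eigenfunction normalizations and indices that grow with $j$), so the sum grows like $J^3$ and the admissible truncation is $J\lesssim \varepsilon^{-1/6}$ rather than $\varepsilon^{-1/2}$. This does not break the structure of your argument, but it tightens the decay rate you must extract from the spectral approximation theory: you need $\delta(J)\lesssim J^{-6}$ (in the example) rather than $\delta(J)\lesssim J^{-2}$ for the pair $(\varepsilon,J(\varepsilon))$ to satisfy both constraints simultaneously. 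Since the corollary is only existential in $\varepsilon$, the statement survives, but your verification of the spectral gap should be redone with the actual growth of the $L_{\mathcal{F}_j}$ rather than a uniform bound.
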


The question now is, how do we relate this result to the stochastic reduction method?\\
First, we multiply each coefficient function $a_j$ with the corresponding orthogonal function $\phi_j$ for all $j=0,\dots,J$ and obtain the functions 
\begin{align*}
    v_S^\varepsilon(t,x,y)= a_{0,S}(t,y)\phi_0^y(x),\quad  v_F^\varepsilon(t,x,y)= a_{0,F}(t,y)\phi_0^y(x),\quad w^{\varepsilon,J}(t,x,y)=\sum_{j=1}^J a_j(t,y)\phi_j^y(x),
\end{align*}
where
\begin{align*}
    &v_S^\varepsilon(t,x,y)\in C^0\big([0,T];\mathcal{H}^2_{0,0}(\mathbb{R})\times (H^2(-R,R)\cap H^1_0(-R,R)\cap Y_S^\zeta(-R,R))\big)\cap C^1\big((0,T];\mathcal{L}^2_{0,0}(\mathbb{R})\times Y_S^\zeta(-R,R)\big),\\
    &v_F^\varepsilon(t,x,y)\in C^0\big([0,T];\mathcal{H}^2_{0,0}(\mathbb{R})\times (H^2(-R,R)\cap H^1_0(-R,R)\cap Y_F^\zeta(-R,R))\big)\cap C^1\big((0,T];\mathcal{L}^2_{0,0}(\mathbb{R})\times Y_F^\zeta(-R,R)\big),\\
    &w^{\varepsilon,J}(t,x,y)\in C^0\big([0,T];\mathcal{H}^2_{1,J}(\mathbb{R})\times (H^2(-R,R)\cap H^1_0(-R,R))\big)\cap C^1\big((0,T];\mathcal{L}^2_{1,J}(\mathbb{R})\times L^2(-R,R)\big),
\end{align*}
where $\mathcal{H}^2_{i,j}(\mathbb{R})=\pr_{\textnormal{span}\{\phi_i,\dots,\phi_j\}}H^2(\mathbb{R})$ and $\mathcal{L}^2_{i,j}(\mathbb{R})=\pr_{\textnormal{span}\{\phi_i,\dots,\phi_j\}}L^2(\mathbb{R})$.\\

Due to the special structure of the original system and the properties of the stochastic reduction method these functions then satisfy
\begin{align}\label{eq: retransformed v}
    \partial_t(v_F^\varepsilon+v_S^\varepsilon)&= \PP \mathfrak{L}_2 (v_F^\varepsilon+v_S^\varepsilon+w^{\varepsilon,J}),\\ \label{eq: retransformed w}
    \partial_t w^{\varepsilon,J}&= \frac{1}{\varepsilon}\mathfrak{L}_1 w^{\varepsilon,J}+ \QQ \mathfrak{L}_2(v_F^\varepsilon+v_S^\varepsilon+w^{\varepsilon,J}).
\end{align}
The original system expressed in these functions takes the form
\begin{align*}
 \partial_t (v_F^\varepsilon+v_S^\varepsilon)&= \PP \mathfrak{L}_2 (v_F^\varepsilon+v_S^\varepsilon+w^{\varepsilon,J})+R^J_1(w^{\varepsilon,R}),\\ 
    \partial_t w^{\varepsilon,J}&= \frac{1}{\varepsilon}\mathfrak{L}_1 w^{\varepsilon,J}+ \QQ \mathfrak{L}_2(v_F^\varepsilon+v_S^\varepsilon+w^{\varepsilon,J})+R^J_2(w^{\varepsilon,R}),
\end{align*}
where $R^J_1,R^J_2$ are small error terms depending on the reminder term $w^{\varepsilon,R}= \sum_{j=J+1}^\infty a_j(t,y)\phi_j^y(x)$.
By Proposition \ref{prop: galerkin approx} it follows that the reminder term satisfies $\|R^J_{1,2}(w^{\varepsilon,R})\|_{L^2(-R,R)}=0$ as the truncation $J\to \infty$. 
% Moreover, due to the linear structure of the error terms and the properties of the eigenfunctions $\phi_j$ we can estimate them as follows
% \begin{align*}
%     \|R_1(w^{\varepsilon,R}(t,x,y))\|^2_{L^2(\mathbb{R}\times(-R,R))}&\leq \int_{\mathbb{R}} \int_{-R}^R \phi_0^y(x)^4 \bigg(\sum_{j=J+1}^\infty \partial_y(a_j(y)G_j(y))\bigg)^2\,\textnormal{d} y\,\textnormal{d} x.
% \end{align*}
% As  $w^\varepsilon(t,x,y)=\sum_{j\in \mathbb{N}\setminus\{0\}} a_j(t,y)\phi_j^y(x)\in C^1((0,T];L^2(\mathbb{R}\times(-R,R))$ and again using the properties of the orthonormal eigenfunctions it follows that there exists a $J(\varepsilon)$ such that $\|\sum_{j=J+1}^\infty a_j(t,y)\|_{L^2(-R,R)}<\varepsilon$.
% Hence, we can conclude that $\|R_{1}^J\|_{L^2}\leq \varepsilon$ for all $J\geq J(\varepsilon)$ and a similar estimate can be obtained for the error term $R_{2}^J$.
% Moreover, the functions satisfy the following system
% \begin{align}\label{eq: retransformed v}
%     \partial_t(v_F^\varepsilon+v_S^\varepsilon)&= \PP L^J_2 (v_F^\varepsilon+v_S^\varepsilon+w^{\varepsilon,J}),\\ \label{eq: retransformed w}
%     \partial_t w^{\varepsilon,J}&= \frac{1}{\varepsilon}\mathfrak{L}_1 w^{\varepsilon,J}+ \QQ \mathfrak{L}_2(v_F^\varepsilon+v_S^\varepsilon+w^{\varepsilon,J}),
% \end{align}
% where the modified operator $L^J_2$ is given by
% \begin{align*}
%     \mathfrak{L}_2^J u= -\partial_y \big( \pr_{\textnormal{span}(\phi_0,\dots,\phi_J)}g(x,y) u\big) +\frac{\sigma_2^2}{2}\partial_{y,y} u.
% \end{align*}

The slow manifold for system \eqref{eq: retransformed v}-\eqref{eq: retransformed w} enhanced with the proper boundary and initial condition then reads
\begin{align}
    \mathsf{S}^J_{\varepsilon,\zeta}=\bigg\{ \big(\sum_{j=1}^J h^{\varepsilon,\zeta,J}_j(v_S^\varepsilon) ,h^{\varepsilon,\zeta,J}_F(v_S^\varepsilon),v_S^\varepsilon\big):\, v_S^\varepsilon\in \mathcal{H}^2_{0,0}(\mathbb{R})\times (H^2(-R,R)\cap H^1_0(-R,R)\cap Y_S^\zeta(-R,R))\bigg \}.
\end{align}
Then, recalling the definition of the operators $\PP$ and $\QQ$ and their relation with $v$ and $w$ we obtain a system for the modified Fokker-Planck equation
\begin{align}
    \partial_t \varrho^\varepsilon &= \frac{1}{\varepsilon}\mathfrak{L}_1 \varrho^\varepsilon+ \mathfrak{L}_2 \varrho^\varepsilon,
\end{align}
where $\varrho^\varepsilon= v_F^\varepsilon+v_S^\varepsilon+w^{\varepsilon,J} $ and
\begin{align*}
    \varrho^\varepsilon(t,x,y)\in  C^0\big([0,T];\mathcal{H}^2_{0,J}(\mathbb{R})\times (H^2(-R,R)\cap H^1_0(-R,R))\big)\cap C^1\big((0,T];\mathcal{L}^2_{0,J}(\mathbb{R})\times L^2(-R,R)\big)
\end{align*}
Again, we can compare the approximation to the original Fokker-Planck equation, now written in the new function space,
 \begin{align*}
      \partial_t \varrho^\varepsilon &= \frac{1}{\varepsilon}\mathfrak{L}_1 \varrho^\varepsilon+ \mathfrak{L}_2 \varrho^\varepsilon+ R(\rho^\varepsilon-\varrho^\varepsilon),
 \end{align*}
 where the error term can be bounded in terms of $\varepsilon$ for $\rho^\varepsilon-\varrho^\varepsilon$ sufficiently small.
Furthermore, we can recover the slow manifold as in the previous step
\begin{align}\label{eq: slow manifold FPE}
    \mathcal{S}^J_{\varepsilon,\zeta}=\big\{  \big(h^{\varepsilon,\zeta,J}_{F,\varrho}(\varrho^\varepsilon_S),\varrho^\varepsilon_S\big):\, \varrho^\varepsilon_S\in \mathcal{H}^2_{0,0}(\mathbb{R})\times (H^2(-R,R)\cap H^1_0(-R,R)\cap Y_S^\zeta(-R,R)) \big \},
\end{align}
where $\varrho^\varepsilon_S=\pr_{\textnormal{span}(\phi_0)}\pr_{Y_S^\zeta} \varrho^\varepsilon$ and the slow dynamics on the critical manifold are given by
\begin{align}\label{eq: reduced dynamics FPE}
    \partial_t\varrho^\varepsilon_S&= \mathfrak{L}_2 (\varrho^\varepsilon_S+ h^{\varepsilon,\zeta,J}_{F,\varrho}(\varrho^\varepsilon_S)).
\end{align}
We recall that the limit system as $\varepsilon\to 0$ for the fast-slow system \eqref{eq: v equation}-\eqref{eq: w equation} has the form
\begin{align*}
    \partial_t v^0&= \PP \mathfrak{L}_2 v^0,\quad     w^0=0.
\end{align*}
Hence, the limit of the Fokker-Planck equation is given by
\begin{align*}
    \partial_t \rho^0= \mathfrak{L}_2 \rho^0,
\end{align*}
where $\rho^0$ is defined on the critical manifold 
\begin{align}
   \mathcal{S}_0=\{(0,\rho):\, \rho \in \mathcal{H}^2_{0,0}(\mathbb{R})\times (H^2(-R,R)\cap H^1_0(-R,R)\}. 
\end{align}

Hence, we can apply the result of Theorem \ref{thm: slow manifold fpe} on the generalization of the slow manifold theory to fast diffusion systems to the reduced Fokker-Planck equation \eqref{eq: slow manifold FPE}-\eqref{eq: reduced dynamics FPE} and obtain that the slow manifold $\mathcal{S}_{\varepsilon,\zeta}^J$ has the same regularity and attractiveness properties as the critical manifold $\mathcal{S}_0$.   
Moreover, the distance between these two objects is of order $\mathcal{O}(\varepsilon)$, for sufficiently small parameters $\varepsilon,\zeta$.\\

To conclude this section, we have shown how the stochastic reduction method together with a Galerkin-type approximation and geometric singular perturbation theory can prove the existence of a slow manifold for an approximate fast-slow Fokker-Planck equation, with error in the approximation of order $\mathcal{O}(\varepsilon)$ to the original system stemming from a fast-slow stochastic differential equation.

%\newpage
\section{Applications}\label{sec: examples}

% \subsection{Linear System}\label{sec: linear system}
In this section we apply the results for the existence of an approximate slow manifold for a fast-slow Fokker-Planck equation from the previous section to the following linear system
\begin{align}\label{eq:linear system}
    \begin{split}
        \textnormal{d} x&= \frac{1}{\varepsilon}(y-x)\textnormal{d}t + \frac{\sqrt{2}}{\sqrt{\varepsilon}}\textnormal{d}W_1,\\
        \textnormal{d} y&= -x \textnormal{d}t + \sqrt{2}\textnormal{d}W_2.
    \end{split}
\end{align}
Note that in the deterministic setting the critical and slow manifold can be computed explicitly.
% These system can arise in the following fields \textcolor{red}{\dots}

The Fokker-Planck equation for the probability density $\rho$ corresponding to system \eqref{eq:linear system} takes the form 
\begin{align}\label{eq: FPE linear system}
    \partial_t \rho^\varepsilon=\frac{1}{\varepsilon} \partial_x\big(x-y+ \partial_x\big) \rho^\varepsilon+ \partial_y \big(x+\partial_y\big)\rho^\varepsilon,
\end{align}
where the equation is posed on the infinite strip $\Omega= \mathbb{R}\times (-R,R)$.
The stationary distribution/critical manifold can be computed explicitly and is given by
\begin{align*}
    p_s(x)= \frac{1}{\sqrt{2\pi}}\textnormal{e}^{-1/2(y-x)^2}
\end{align*}
and we obtain that in the limit $\varepsilon \to 0$ the reduced system for the marginal density $\bar \rho$ is given by 
\begin{align}
  \partial_t  \bar \rho (t,y)= \partial_y\big(y + \partial_y\big)\bar \rho(t,y),
\end{align}
where $\bar \rho(t,y)= \int_{\mathbb{R}} \rho(t,x,y)\,\textnormal{d}x$.
\begin{remark}
    This equation can also be obtained by taking the formal limit in the SDE and then writing down the Fokker-Planck equation. 
\end{remark}

Next, we consider the equation $\mathfrak{L}_1 \phi(x)= -\lambda \phi(x)$ in the setting of the Sturm-Liouville theory.
As it turns out, for this example, we can compute the eigenvalues and corresponding eigenfunctions explicitly.
We obtain that
\begin{align*}
    \lambda=n=0,1,2,\dots,~\text{ where } n\in \mathbb{N},\quad \phi_n^y(x)= \textnormal{e}^{-1/2(x-y)^2} H_n\bigg(\frac{x-y}{\sqrt{2}}\bigg),
\end{align*}
where $H_n(z)$ denotes the $n$-th Hermite polynomial \cite{andrews1999special}. 
Using the properties of the Hermite polynomials we can write
\begin{align*}
    g(x,y)=-x=-\frac{1}{\sqrt{2}} H_1\bigg(\frac{x-y}{\sqrt{2}}\bigg) -y H_0\bigg(\frac{x-y}{\sqrt{2}}\bigg).
\end{align*}
We start with computing the functions $C_0$, $G_j(y)$, $G_{k,j}(y)$ and $\tilde G_{k,j}(y)$ for $k,j\in \mathbb{N}_0$.
\begin{align*}
    C_0&= \int_{\mathbb{R}} \phi_0^y(x)\,\textnormal{d} x = \sqrt{2\pi},\\
    G_j&= \int_{\mathbb{R}} g(x,y)\phi_j^y(x)\,\textnormal{d} x= \int_{\mathbb{R}}-\frac{1}{\sqrt{2}} H_1\bigg(\frac{x-y}{\sqrt{2}}\bigg) -y H_0\bigg(\frac{x-y}{\sqrt{2}}\bigg)\phi_j^y(x)\,\textnormal{d} x\\
    &= -y \sqrt{2\pi} \delta_{0,j}-2\sqrt{\pi}\delta_{1,j}
\end{align*}
Using the recurrence relation for Hermite polynomials $H_{n+1}(z)=2z H_n(z)-2nH_{n-1}(z)$
we have
\begin{align*}
   G_{k,j}(y)&= -y \int_{\mathbb{R}} H_k(\frac{x-y}{\sqrt{2}})H_j(\frac{x-y}{\sqrt{2}})\textnormal{e}^{-(x-y)^2} \,\textnormal{d} x - \frac{1}{\sqrt{2}}\int_{\mathbb{R}}  H_1(\frac{x-y}{\sqrt{2}}) H_k(\frac{x-y}{\sqrt{2}})H_j(\frac{x-y}{\sqrt{2}}) \textnormal{e}^{-(x-y)^2}\,\textnormal{d} x\\
   &=- y\sqrt{2}\int_{\mathbb{R}} H_k(z)H_j(z)\textnormal{e}^{-2z^2}\, \textnormal{d}z -\int_{\mathbb{R}}  H_1(z) H_k(z)H_j(z) \textnormal{e}^{-2z^2}\,\textnormal{d}z\\
    &= -y\sqrt{2}\int_{\mathbb{R}}  H_k(z)H_j(z)\textnormal{e}^{-2z^2}\,\textnormal{d}z  -\int_{\mathbb{R}}   \big( H_{k+1}(z)+2kH_{k-1}(z)\big) H_j(z) \textnormal{e}^{-2z^2}\,\textnormal{d}z\\
   &= -y\sqrt{2}\delta_{k,j} C_{k,j} -\delta_{k+1,j} C_{k+1,j} - 2k\delta_{k-1,j} C_{k-1,j},
\end{align*}
where the constants $C_{j,j}$ arise as the eigenfunctions $\phi_j^y(x)$ have not yet been normalised. 
Thus, we have
\begin{align*}
  C_{0,0}= \sqrt{\pi}, \quad   C_{j,j}= \int_{\mathbb{R}}  \phi_j^y(x)\phi_j^y(x)\,\textnormal{d}x= \sqrt{\frac{\pi}{2}}(2j-1)!!,~j\geq 1,
\end{align*}
where $(2j-1)!!$ denotes the odd factorial.
For the special case $G_{0,j}(y)$ we have $G_{0,j}(y)= -\sqrt{\frac{\pi}{2}}\delta_{1,j}$.

Using that $\partial_z\big( H_n(z) \textnormal{e}^{-z^2}\big)=-H_{n+1}(z) \textnormal{e}^{-z^2}$ we have
\begin{align*}
    \tilde G_{k,j} (y)&=  \int_{\mathbb{R}}  g(x,y) H_k(\frac{x-y}{\sqrt{2}})  \textnormal{e}^{-1/2(x-y)^2} \partial_y\bigg(H_{j}(\frac{x-y}{\sqrt{2}})  \textnormal{e}^{-1/2(x-y)^2}\bigg)\,\textnormal{d}x\\
    &= \sqrt{2}\int_{\mathbb{R}} \big(-y H_0(z) -\frac{1}{\sqrt{2}}H_1(z)\big) H_k(z) \textnormal{e}^{-z^2} \partial_z\bigg(H_{j}(z)  \textnormal{e}^{-z^2}\bigg) \frac{-1}{\sqrt{2}}\,\textnormal{d}z\\
    &=\int_{\mathbb{R}} \big(-y H_0(z) -\frac{1}{\sqrt{2}}H_1(z)\big) H_k(z) \textnormal{e}^{-z^2} H_{j+1}(z)  \textnormal{e}^{-z^2}\,\textnormal{d}z\\
    &= -y \int_{\mathbb{R}}  H_k(z) H_{j+1}(z)  \textnormal{e}^{-2z^2}\,\textnormal{d}z-\frac{1}{\sqrt{2}} \int_{\mathbb{R}}  \big( H_{k+1}(z)+ 2kH_{k-1}(z)\big) H_{j+1}(z)  \textnormal{e}^{-2z^2}\,\textnormal{d}z\\
    &= -y \delta_{k,j+1} C_{k,j+1} -\frac{1}{\sqrt{2}}\big(\delta_{k+1,j+1}C_{k+1,j+1}+2k\delta_{k-1,j+1}C_{k-1,j+1}\big).
\end{align*}

Then, for the slow component we have 
\begin{align} \label{eq: example slow}
      \partial_t   a_0(t,y)&=  \partial_{y,y} a_0(t,y)+  \partial_y (y  a_0(t,y)) +\sqrt{2}\partial_y a_{1}(t,y).
\end{align}
Similar, for the fast component we have
\begin{align*}
    C_{j,j}\partial_t a_j(t,y)  &=  C_{j,j}\partial_{y,y} a_j(t,y)-   C_{j,j}\frac{\lambda_j}{\varepsilon}a_j(t,y)  - \partial_y \big( G_{0,j}(y) a_0(t,y)\big) - \sum_{i \in \mathbb{N}\setminus \{0\}}\partial_y \big( G_{i,j}(y) a_i(t,y)\big)\\
        &\quad + \tilde G_{0,j}(y)a_0(t,y)+ \sum_{i \in \mathbb{N}\setminus \{0\}}\tilde G_{i,j}(y) a_i(t,y)
\end{align*}
which can be simplified using the previous results
\begin{align}\label{eq: example fast}
\begin{split}
    \partial_t a_j(t,y) &= \partial_{y,y} a_j(t,y) - \frac{j}{\varepsilon}a_j(t,y)  +\sqrt{2}  \partial_y\big( y a_j(t,y)\big ) -(2j+1) a_j(t,y)\\
    &\quad+\delta_{1,j}\partial_y a_1(t,y)+ \partial_y a_{j-1}(t,y) +2(j+1)\partial_y a_{j+1}(t,y)\\
    &\quad -y (2j+1) a_{j+1}(t,y) -2(j+2)(2j+1)a_{j+2}(t,y).
    \end{split}
\end{align}
To close the system we recall the following boundary and initial conditions 
\begin{align}
    a_j(t,-R)&=0=a_j(t,R),\quad \textnormal{for all } t>0,\, j\in \mathbb{N},\\
    a_j(0,y)&=a_{j,0}(y),\quad \textnormal{for all } y\in (-R,R) \, j\in \mathbb{N},
\end{align}
where $\sum_{j\in \mathbb{N}} a_{0,j}(y)\phi_j^y(x)=\rho_0(x,y)$.

In the next step we apply the results on the existence of a slow manifold of the previous section. 
To this end we have to truncate the system at a finite $J\in \mathbb{N}$. 
Using that the Lipschitz constant of the nonlinearities in equation \eqref{eq: example fast} satisfies $L_{\mathcal{F}_j}=Cj^2$ and the relation between the truncation $J$ and $\varepsilon$, $\varepsilon^{1/2} \sum_{j=0}^JC j^2<1$, we obtain that $\varepsilon^{1/2}J^3<1$, i.e, the truncation at $J$ has to be of order $\mathcal{O}(\varepsilon^{-1/6})$ for the spectral gap condition \eqref{spectral gap condition} to hold.
Then, we can apply Theorem \ref{thm: slow manifold fpe} to obtain
\begin{align*}
    S_{\varepsilon,\zeta}^J=\big\{ \big(h^{\varepsilon,\zeta,J}_1(a_{0,S}),\dots,h^{\varepsilon,\zeta,J}_J(a_{0,S}),h^{\varepsilon,\zeta,J}_F(a_{0,S}),a_{0,S}\big):\, a_{0,S}\in Y_1\cap Y_S^\zeta\big \},
\end{align*}
where $h^{\varepsilon,\zeta}$ is the fixed point of the Lyapunov-Perron operator (see Lemma \ref{lemma: lyapunov perron}).
Following the steps from the previous section we derive the approximate slow manifold of system \eqref{eq: FPE linear system} given by
\begin{align*}
    \mathcal{S}_{\varepsilon,\zeta}^J=\big\{  \big(h^{\varepsilon,\zeta,J}_{F,\varrho}(\varrho^\varepsilon_S),\varrho^\varepsilon_S\big):\, \varrho^\varepsilon_S\in \mathcal{H}^2_{0,0}(\mathbb{R})\times (H^2(-R,R)\cap H^1_0(-R,R)\cap Y_S^\zeta(-R,R)) \big \}
\end{align*}
together with the reduced Fokker-Planck equation 
\begin{align*}
   \partial_t\varrho^\varepsilon_S&= \partial_y^2(\varrho^\varepsilon_S+ h^{\varepsilon,\zeta,J}_{F,\varrho}(\varrho^\varepsilon_S)) + \partial_y\big(x(\varrho^\varepsilon_S+ h^{\varepsilon,\zeta,J}_{F,\varrho}(\varrho^\varepsilon_S))\big) .
\end{align*}
 
% \begin{remark}
%     As the slow variable $a_0$ only depends on $a_1$ we consider only the coupled system of the these two variables.
%      Thus, we have the following leading order system
%     \begin{align*}
%         \partial_t   a_0(t,y)&= \partial_{y,y} a_0(t,y)+ \partial_y (y  a_0(t,y)) +\frac{1}{\sqrt{2}}\partial_y a_{1}(t,y)\\
%         \partial_t a_1(t,y) &= \partial_{y,y} a_1(t,y) -\frac{1}{\varepsilon} a_1(t,y) +\sqrt{2}\partial_y \big(y a_1(t,y)\big)-\frac{1}{\sqrt{2}}a_1(t,y)+\partial_y a_0(t,y).
%     \end{align*}
% \end{remark}

\section{Discussion}\label{sec: discussion}

We conclude this article by a discussion of the results and put them into the broader context of the approaches and results mentioned in the introduction. \\

The first observation we make is that the approach presented in this paper can be extended to Fokker-Planck equations derived from fast-slow stochastic differential equations with non-Gaussian noise.
One example is the case of stable L\'evy noise \cite{zolotarev1986one,albeverio2000invariant}.
Consider the following SDE
\begin{align*}
    \textnormal{d}x= f(t,x)\textnormal{d}t +F(t,x)\textnormal{d}L,
\end{align*}
where the the stable L\'evy noise is given by a L\'evy stable distribution $S_k=S_k(\alpha,\beta,\gamma,p)$, depending on four parameters: the stability index $\alpha\in (0,2]$, the skewness parameter $\beta\in [-1,1]$, the scale parameter $\lambda\in (0,\infty)$ and a location parameter $p\in (-\infty,\infty)$.
Excluding the singular case $\alpha=1$ and $\beta\neq 0$ and assuming $p=0$ we obtain
\begin{align*}
    S_k(\alpha,\beta,\gamma)= \exp\big[-\gamma |k|^\alpha\big(1+i\beta \textnormal{sgn}(k)\tan(\pi \alpha/2)\big)\big].
\end{align*}
Then, the corresponding generalized Fokker-Planck equation is given by
\begin{align*}
  \frac{\partial}{\partial t} \rho(x,t)&= -\frac{\partial}{\partial x} \big( f(x,t)\rho (x,t)\big) +\gamma\frac{\partial^\alpha}{\partial|x|^\alpha}\big( F^\alpha(x,t)\rho(x,t)\big) +\gamma \beta \tan(\pi\alpha/2) \frac{\partial}{\partial x}\frac{\partial^{\alpha-1}}{\partial |x|^{\alpha-1}}\big(F^\alpha(x,t)\rho(x,t)\big).
\end{align*}
For details of the derivation we refer to \cite{schertzer2001fractional,denisov2009generalized,gao2016fokker}.
It is obvious that for $\alpha=2$ this equation takes the form of the standard Fokker-Planck equation.
Note, that for $\alpha>1$ the leading order derivative is given by a fractional Laplacian, for which the assumptions of the semigroup theory of section \ref{sec:abstract theory} still hold \cite{achleitner2024fractional}.
In the context of fast-slow systems some work was done by Duan et al. \cite{xu2011averaging,huang2022homogenization}.
However, with the novel approach presented in this work we can establish the existence of a slow manifold also for the case of stable L\'evy noise and thus extend the previous results.

More generally speaking, whenever the operators in a generalized Fokker-Planck equation, stemming from a non-Gaussian noise SDE, can be treated as the generators of a semigroup our approach can be applied.
Other examples besides the stable L\'evy noise are the Poisson white noise or compound noise as a combination of the three types \cite{denisov2009generalized}.\\

A second observation we make is that although the starting point of the stochastic reduction method is a fast-slow SDE, we obtain a slow manifold with corresponding slow dynamics for the Fokker-Planck equation derived from the original fast-slow SDE. 
The question now is if and how it is possible to obtain a SDE for the reduced Fokker-Planck equation.
Using different methods such as a probabilistic interpretation of the Fokker-Planck equation or sampling methods a partial answer to this problem was given in  \cite{barbu2018probabilistic,barbu2020nonlinear} and \cite{boffi2023probability,albergo2023stochastic}.\\

We remark that it is also possible to consider the problem with measurable initial data such as $\rho_0\in L^1(\Omega)$.
The elemental results for the $L^1$-semigroup theory can be found in \cite{arendt2002semigroups,angiuli2010analytic} and the references therein.
Again, with the underlying structure being well-defined, we can transfer the results of Section \ref{sec: slow manifold in FPE} also to the case of $L^1$-data.\\

% Further applications of the slow manifold theory for a fast-slow Fokker-Planck equation is in the study of exit problems \cite{matkowsky1977exit,maier1997limiting,grasman2013asymptotic} and a general better understanding of fast-slow stochastic differential equations \cite{eichinger2020sample}.\\

Lastly, we want to briefly connect the slow manifold stochastic reduction method for a fast-slow Fokker-Planck equation with the stochastic averaging approach as presented in \cite{pavliotis2008multiscale} and \cite{walter2006conditional}.
For a fast-slow system of the form
\begin{align*}
     \textnormal{d}x&= \frac{1}{\varepsilon} f(x,y) \textnormal{d}t +\frac{\sigma}{\sqrt{\varepsilon}}\textnormal{d}W_t,\quad x\in \mathbb{R},\\
    \textnormal{d}y&=  g(x,y) \textnormal{d}t +\sigma'\textnormal{d}W_t,\quad y\in \mathbb{R},
\end{align*}
the stochastic averaging gives the reduced limit equation 
\begin{align*}
    \textnormal{d}y^0&= \bigg(\int_{\mathbb{R}}  g(x,y^0)p_s(x)\, \textnormal{d}x\bigg) \textnormal{d}t +\sigma'\textnormal{d}W_t,\quad y^0\in \mathbb{R}^m,
\end{align*}
where $p_s(x)$ is the invariant density (see equations \eqref{eq: stationary distr} and \eqref{eq: stationary distr 2}).
The approach and results presented in this paper, however, go beyond the stochastic averaging principle by introducing the concept of a slow manifold for the Fokker-Planck equation and obtaining also a reduced equation for the perturbed system, which is of order $\mathcal{O}(\varepsilon)$ close to the limit equation (see results in section \ref{sec: slow manifold in FPE}).
In a future work we want to connect our approach, i.e. the slow manifold for the fast-slow Fokker-Planck equation, to the slow manifolds for fast-slow random dynamical systems introduced in \cite{schmalfuss2008invariant} and study the relation between the two objects.\\

\section*{Acknowledgements}
CK and JES would like to thank the DFG for partial support via grant 456754695. CK would like to thank the VolkswagenStiftung for support via a Lichtenberg Professorship.
Moreover, the authors would like to thank Giacomo Landi for the helpful feedback and suggestions.

\bibliographystyle{siam}
\bibliography{lit}
\end{document}